\documentclass[12pt,amssymb,amscd]{amsart}
\usepackage{color}
\usepackage{amssymb}
\usepackage{amsmath}
\usepackage{amsthm} 
\usepackage{mathrsfs}                                                           
\usepackage[all]{xy}
\usepackage{wrapfig} 
\usepackage{verbatim}
\usepackage{appendix}
\usepackage{color}

\def\2{{1\over 2}}

\newcommand{\rf}[1]{(\ref{#1})}
\def\b{\bar}

\renewcommand{\t}{\tilde}
\newcommand{\p}{\partial}

\def\cE{\mathcal{E}}
\def\cF{\mathcal{F}}

\def\R{\mathbb{R}}
\def\a{\alpha}
\def\D{\Delta}

\def\inv{^{-1}}
\def\<{\langle}
\def\>{\rangle}
\def\+{\dagger}
\def\tab{\;\;\;\;\;\;}
\def\ox{\otimes}

\renewcommand{\i}{\mathrm{i}}

\newtheorem{Thm}{Theorem}[section]
\newtheorem{Lem}[Thm]{Lemma}
\newtheorem{Prop}[Thm]{Proposition}
\newtheorem{Cor}[Thm]{Corollary}
\newtheorem{Rem}[Thm]{Remark}
\newtheorem{Def}[Thm]{Definition}

\def\R{\mathbb{R}}
\def\Q{\mathbb{Q}}

\def\C{\mathbb{C}}
\def\Z{\mathbb{Z}}

\def\to{\longrightarrow}

\def\cE{\mathcal{E}}
\def\cF{\mathcal{F}}

\def\cK{\mathcal{K}}

\def\cN{\mathcal{N}}
\def\cP{\mathcal{P}}

\def\cU{\mathcal{U}}

\def\cW{\mathcal{W}}
\def\a{\alpha}
\def\b{\beta}

\def\c{\gamma}
\def\D{\Delta}

\def\s{\sigma}
\def\t{\tilde}
\def\W{\Omega}

\def\ze{\zeta}

\def\sl{\mathfrak{sl}}
\def\g{\mathfrak{g}}
\def\gl{\mathfrak{gl}}

\def\osp{\mathfrak{osp}}

\def\ox{\otimes}
\def\o+{\oplus}
\def\bo+{\bigoplus}

\def\p[#1,#2]{\phi_{#1,#2}}
\def\what[#1]{\widehat{#1}}

\def\bi{\textbf{i}}

\def\oo{\infty}
\def\=>{\Longrightarrow}

\def\<{\langle}
\def\>{\rangle}
\def\corr{\longleftrightarrow}
\def\^{\wedge}
\def\+{\dagger}

\def\iff{\Longleftrightarrow}

\def\inv{^{-1}}

\def\over[#1]{\overline{#1}}
\def\vec[#1]{\overrightarrow{#1}}
\def\xto[#1]{\xrightarrow{#1}}
\def\dd[#1,#2]{\frac{d#1}{d#2}}
\def\del[#1,#2]{\frac{\partial #1}{\partial #2}}
\def\tab{\;\;\;\;\;\;}

\newcommand{\til}[1]{\widetilde{#1}}
\newcommand{\veca}[2][cccccccccccccccccccccccccccccccccccccccccc]{\left(\begin{array}{#1}#2 \\ \end{array} \right)}

\newcommand{\Eq}[1]{\begin{align}#1\end{align}}
\newcommand{\Eqn}[1]{\begin{align*}#1\end{align*}}

\begin{document}
\title{Supersymmetry and the Modular Double}
\author{Ivan Chi-Ho Ip}
\author{Anton M. Zeitlin}
\address{\newline
Ivan Chi-Ho Ip,\newline
Kavli Institute for the Physics and Mathematics of the Universe,\newline
University of Tokyo,\newline
Kashiwa, Chiba,\newline
277-8583 Japan\newline
ivan.ip@ipmu.jp\newline
http://member.ipmu.jp/ivan.ip}
\address{ 
\newline 
Anton M. Zeitlin,\newline
Department of Mathematics,\newline
Columbia University,\newline
Room 509, MC 4406,\newline
2990 Broadway,\newline
New York, NY 10027,\newline
zeitlin@math.columbia.edu,\newline
http://math.columbia.edu/$\sim$zeitlin \newline
http://www.ipme.ru/zam.html  }

\maketitle
\begin{abstract}
A counterpart of the modular double for quantum superalgebra
$\cU_q(\osp(1|2))$ is constructed by means of 
supersymmetric quantum mechanics. We also construct the $R$-matrix operator acting in the corresponding representations, which is expressed via quantum dilogarithm.
\end{abstract}
\section{Introduction}
The so-called modular double was introduced in 1999 for the quantum algebra  $\cU_q(\sl(2, \mathbb{R}))$ in \cite{faddeev}. The modular double is a certain representation in $L^2(\mathbb{R})$ of two quantum algebras 
$\cU_q(\sl(2, \mathbb{R}))$ and $\cU_{\t q}(\sl(2, \mathbb{R}))$, so that 
$|q|=|\t q|=1$, $q=e^{\pi ib^2}$, $\t q=e^{\pi ib^{-2}}$. This representation is constructed via a certain 
"free field realization" of  $\cU_q(\sl(2, \mathbb{R}))$ by means of the Weyl algebra. The resulting generators of both 
$\cU_q(\sl(2, \mathbb{R}))$ and $\cU_{\t q}(\sl(2,\mathbb{R}))$ turn out to be unbounded positively-defined essentially self-adjoint operators in 
$L^2(\mathbb{R})$. 

One of the reasons why the modular double is important, is as follows. 
There is a well known result of rational conformal field 
theory, that the braided tensor categories of unitary representations of compact quantum groups have their equivalent counterparts in the category of representations of the corresponding WZW models, and by the Drinfeld-Sokolov reduction, in the category of representations of $W$-algebras. 
It appears that the representation of the modular double of $\cU_q(\sl(2,\mathbb R))$ plays the same role for the Liouville theory. For example, 3j-symbols for the tensor product of modular double representations \cite{PT2} appear in the fusion product for the Liouville vertex operators \cite{teschner}. 
It is also expected that there are generalizations of the modular double representation to the higher rank, which should be related to the braided tensor categories from relevant Toda field theories. There were several attempts to construct such representations \cite{fip}, \cite{gerasimov}, \cite{Ip2}, \cite{Ip3}, \cite{kashaev}. 

Three years ago, Liouville and related Toda theories have drawn a lot of attention in the context of the so-called AGT correspondence \cite{agt}, which is a correspondence between two dimensional models and 4-dimensional gauge theory. 
Recently it was shown that the supersymmetric Toda theories, especially $\cN=1$ SUSY Liouville theory are also very important in this context \cite{belavin}, \cite{bonelli}. 
It is natural to expect that the corresponding quantum superalgebras also possess the modular double representation, which 
will characterize the fusion products as well as in the bosonic case. 
The simplest supersymmetric Toda-like theory is $\cN=1$ SUSY Liouville theory, which is related to the superalgebra $\osp(1|2)$. This superalgebra  plays an important role in the classification of semisimple superalgebras: it is an analogue of $\sl(2)$ subalgebra for odd (black) roots in the higher rank case. 

In this note we construct an analogue of the modular double representation for the real form of $\cU_q(\osp(1|2))$, whose tensor category, as we hope, will be related to fusion products of vertex operators in $\cN=1$ SUSY Liouville theory.

The structure of the paper is as follows. In Section 2 we remind the structure of the modular double of $\cU_q(\sl(2,\mathbb R))$ including properties of quantum dilogarithm, which is relevant for the construction of the $R$-matrix. Section 3 is devoted to the construction of the analogue of modular double for $\cU_q(\osp(1|2))$, while the $R$-matrix is given in Section 4. In Section 5 we outline possible directions of future study.

\section{Reminder of the modular double.} 
\noindent{\bf 2.1.  Construction of Representations.} Let $q=e^{\pi ib^2}$ where ${0<b<1}$ and $b^2\in\R\setminus\Q$. Let us consider the operators $p$, $x$ satisfying the Heisenberg commutation relation: $[p,x]=\frac{1}{2\pi i}$. Then the following operators:
\begin{eqnarray}
U=e^{2\pi bx},\quad  V=e^{2\pi b p}, \quad 
\end{eqnarray}
satisfy the quantum plane commutation relation: 
\Eq{UV=q^2VU.}
Then, one can construct a realization of the generators of $\cU_q(\sl(2, \mathbb{R}))$ by means of just $U,V$ and some real parameter $Z$:
\begin{eqnarray}\label{efk}
E=i\frac{V+U^{-1}Z}{q-q^{-1}},\quad F=i\frac{U+V^{-1}Z^{-1}}{q-q^{-1}}, \quad K=q^{-1}UV, 
\end{eqnarray}
in other words
\begin{eqnarray}
[E,F]=\frac{K-K^{-1}}{q-q^{-1}}, \quad KE=q^2EK, \quad KF=q^{-2}FK.
\end{eqnarray}
Therefore, considering the standard representation of the Heisenberg algebra ($p=\frac{1}{2\pi i}\frac{d}{dx}$) on 
$L^2(\mathbb{R})$, one obtains a representation $\cP_z$ of $\cU_q(\sl(2, \mathbb{R}))$ on $L^2(\mathbb{R})$ by \emph{positive operators} \cite{schmudgen}. More precisely, as one can see from the form of the generators, they are manifestly Hermitian, and it can be shown that $E, F, K$ are unbounded positive essentially self-adjoint operators on $L^2(\mathbb{R})$, with a dense core given by the subspace
\Eq{\label{WW}\cW=span\{e^{-\a x^2+\b}P(x): \a\in\R_{>0}, \b\in \C\},}
where $P(x)$ is polynomial in $x$. The observation of \cite{faddeev} is that one can consider the dual pair of generating elements
\begin{eqnarray}
\t U=U^{\frac{1}{b^2}},\quad \t V=V^{\frac{1}{b^2}},
\end{eqnarray}
which satisfy the relation $\tilde U\tilde V=\tilde q^2\tilde V\tilde U$, where $\tilde q=e^{\pi i b^{-2}}$, and therefore define another representation of $\cU_{\tilde q}(\sl(2, \mathbb{R}))$ on the space $L^2(\mathbb{R})$. The corresponding generators $\tilde E$, 
$\tilde F$, $\tilde K$ commute with $E, F, K$ on the dense set $\cW$ in $L^2(\mathbb{R})$. However, their spectral projections do not commute. 
It is important to mention that the corresponding Hopf algebra structures are compatible (we see that $\t E, \t F, \t K$ depend on  $E, F, K$) . To prove it, one needs the following proposition (see \cite{BT}, \cite{Vo}).\\

\noindent{\bf Proposition 2.1.} {\it Let $A$ and $B$ be self-adjoint operators, such that $[A,B]=2\pi i$. Then the following formula holds:
\begin{eqnarray}
(u+v)^{\frac{1}{b^2}}=u^{\frac{1}{b^2}}+v^{\frac{1}{b^2}},
\end{eqnarray}
if $u=e^{bA}$ and $v=e^{bB}$.}\\

Then, if we denote by $$e:=(2\sin{\pi b^2})E, \tab f:=(2\sin{\pi b^2})F,$$ and similarly 
$$\t e:=(2\sin{\pi b^{-2}})\tilde E,\tab \t f:=(2\sin{\pi b^{-2}})\tilde F,$$ one obtains that 
\begin{eqnarray}\label{powers}
\t e=e^{\frac{1}{b^2}},\quad  \t f=f^{\frac{1}{b^2}}.
\end{eqnarray}
Therefore, if we look at the standard coproduct applied to the generators of $\cU_q(\sl(2, \mathbb{R}))$:
\begin{eqnarray}\label{sl2coproduct}
&&\Delta(e)=e\otimes K+ 1\otimes e,\\
&&\Delta(f)=f\otimes 1+ K^{-1}\otimes f,\nonumber\\
&&\Delta(K)=K\otimes K,\nonumber
\end{eqnarray}
considered as operators on $L_2(\mathbb{R}\times \mathbb{R})$ one obtains via Proposition 2.1 that 
\begin{eqnarray}
&&\Delta(\t e)=\t e\otimes \tilde K+  1\otimes \t e,\\
&&\Delta(\t f)=\t f\otimes 1+ \tilde K^{-1}\otimes \t f,\nonumber\\
&&\Delta(\tilde K)=\tilde K\otimes \tilde K,\nonumber
\end{eqnarray}
reproducing the coproduct structure of $\cU_{\tilde q}(\sl(2, \mathbb{R}))$. Therefore, the above results can be summarized in the following theorem.\\

\noindent {\bf Theorem 2.1.} {\it 
i)The self-adjoint operators $E,F, K$ from \rf{efk} generate a unitary representation of 
the Hopf algebra $\cU_q(\sl(2,\mathbb{R}))$ on $L^2(\mathbb{R})$. \\
ii) The self-adjoint operators $\t E, \t F, \t K$, related to $E,F, K$ via substitution $b\to \frac{1}{b}$, 
generate Hopf algebra $\cU_{\t q}(\sl(2,\mathbb{R}))$ where the coproduct is induced by the one of 
$\cU_q(\sl(2,\mathbb{R}))$ via the formulas \rf{powers}.}\\

In the following we will refer to $\cU_{\t q}(\sl(2,\mathbb{R}))$ as a $modular$ $dual$ of $\cU_{q}(\sl(2,\mathbb{R}))$. 
We note here, that on a dense subset of $L^2(\mathbb{R})$ it makes sense to talk about the action of the tensor product 
\Eq{\cU_{q\tilde{q}}(\sl(2,\R)):=\cU_{q}(\sl(2,\mathbb{R}))\otimes \cU_{\t q}(\sl(2,\mathbb{R})),} however it is not true in general because the corresponding self-adjoint operators do not commute, as we mentioned before. \\

\noindent{ \bf2.2. Quantum dilogarithm and its properties.}  In order to define the universal $R$ matrix in the case of $\cU_q(\sl(2,\R))$, one needs to introduce two special functions $G_b(x)$ and $g_b(x)$, called the quantum dilogarithm functions. In this subsection, let us recall the definition and some properties \cite{BT, Ip1, PT2} that are needed in the calculations in this paper.

\begin{Def} Let $Q=b+b\inv$. The quantum dilogarithm function $G_b(x)$ is defined on ${0< Re(z)< Q}$ by the integral formula
\Eq{\label{intform} G_b(x)=\over[\ze_b]\exp\left(-\int_{\W}\frac{e^{\pi tz}}{(e^{\pi bt}-1)(e^{\pi b\inv t}-1)}\frac{dt}{t}\right),}
where \Eq{\ze_b=e^{\frac{\pi \bi }{2}(\frac{b^2+b^{-2}}{6}+\frac{1}{2})},}
and the contour goes along $\R$ with a small semicircle going above the pole at $t=0$. This can be extended meromorphically to the whole complex plane with poles at $x=-nb-mb\inv$ and zeros at $x=Q+nb+mb\inv$, for $n,m\in\Z_{\geq0}$.
\end{Def}

\begin{Def}\label{Defgb} The function $g_b(x)$ is defined by
\Eq{g_b(x)=\frac{\over[\ze_b]}{G_b(\frac{Q}{2}+\frac{\log x}{2\pi i b})},}
where $\log$ takes the principal branch of $x$.
\end{Def}

The function $g_b(x)$ can also be written as an integral formula with $G_b$ as its kernel:

\begin{Lem}\label{FT} \cite[(3.31), (3.32)]{BT} We have the following Fourier transformation formula:
\Eq{\int_{\R+i 0} \frac{e^{-\pi i  t^2}}{G_b(Q+i t)}x^{i b\inv t}dt=g_b(x), }
\Eq{\int_{\R+i 0} \frac{e^{-\pi Qt}}{G_b(Q+i t)}x^{i b\inv t}dt=g_b^*(x) ,} where $x>0$ and the contour goes above the pole
at $t=0$.
\end{Lem}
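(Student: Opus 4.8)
The plan is to reduce both formulas to a single clean transform in one complex variable and then prove it by the uniqueness argument that is standard for double--sine--type functions: matching a first--order difference equation together with growth control.

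First I would substitute $u=\frac{\log x}{2\pi i b}$, so that $x^{ib\inv t}=e^{-2\pi t u}$ and $g_b(x)=\overline{\zeta_b}/G_b(\tfrac{Q}{2}+u)$. The first identity then reads
\Eqn{F(u):=\int_{\R+i0}\frac{e^{-\pi i t^2}}{G_b(Q+it)}\,e^{-2\pi t u}\,dt=\frac{\overline{\zeta_b}}{G_b\!\left(\tfrac{Q}{2}+u\right)}.}
Using the asymptotics of $G_b$ --- namely $G_b(y)\to\overline{\zeta_b}$ as $\mathrm{Im}\,y\to+\oo$ and, via the inversion relation $G_b(y)G_b(Q-y)=e^{\pi i y(y-Q)}$ (up to the standard constant), $G_b(y)\sim\zeta_b\,e^{\pi i y(y-Q)}$ as $\mathrm{Im}\,y\to-\oo$ --- one checks that the integrand decays exponentially as $t\to-\oo$ and is controlled by the oscillatory Gaussian $e^{-\pi i t^2}$ as $t\to+\oo$, so that $F$ is holomorphic in a vertical strip whose right edge is the first singularity at $u=\tfrac{Q}{2}$. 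The prescription $\R+i0$ is precisely what is needed to pass above the pole of $1/G_b(Q+it)$ at $t=0$ coming from the zero $G_b(Q)=0$.

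Next I would show that $F$ satisfies the same difference equation as the right--hand side. Shifting the contour $t\mapsto t-ib\inv$ and applying the functional equation $G_b(y+b\inv)=(1-e^{2\pi i b\inv y})G_b(y)$ inside the integrand, one should obtain
\Eqn{F(u)=\bigl(1-e^{2\pi i b\inv(\frac{Q}{2}+u)}\bigr)\,F(u+b\inv),}
which is exactly the relation satisfied by $H(u):=\overline{\zeta_b}/G_b(\tfrac{Q}{2}+u)$, as follows from the same functional equation applied to $G_b(\tfrac{Q}{2}+u+b\inv)$; by the $b\leftrightarrow b\inv$ symmetry of the whole set--up the analogous equation with $b$ in place of $b\inv$ holds too. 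It then follows that $\Psi(u):=F(u)/H(u)$ is invariant under $u\mapsto u+b$ and $u\mapsto u+b\inv$; since the two difference equations force $F$ and $H$ to share the same zeros and poles, $\Psi$ extends to an entire function that is doubly periodic and bounded on a period strip, and because $b^2\notin\Q$ the two periods are incommensurable, so a Liouville--type argument gives $\Psi\equiv\mathrm{const}$. I would fix the constant by residue matching at $u=\tfrac{Q}{2}$ (comparing the residue of $1/G_b(\tfrac{Q}{2}+u)$ against that produced by the $t=0$ pole), or equivalently by evaluating a degenerate Fresnel integral at one convenient point. Finally, the second identity follows from the first via inversion: writing $\tfrac{1}{G_b(Q+it)}=G_b(-it)\,e^{\pi Q t}e^{\pi i t^2}$ turns the kernels $e^{-\pi i t^2}/G_b(Q+it)$ and $e^{-\pi Q t}/G_b(Q+it)$ into $G_b(-it)e^{\pi Q t}$ and $G_b(-it)e^{\pi i t^2}$ respectively, exhibiting $g_b$ and $g_b^*$ as the two members of one transform pair; the $g_b^*$ case is then settled by the identical difference--equation argument.

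I expect the main obstacle to be the analytic bookkeeping rather than the algebra. The integral converges only conditionally because of the oscillatory factor $e^{-\pi i t^2}$ (this is exactly why the $+i0$ tilt is essential), so the contour shift that produces the difference equation and the limit used to pin down the constant both require careful estimates; and making the Liouville step rigorous demands establishing genuine boundedness of $\Psi$ across a full $b\times b\inv$ period strip rather than merely along the real axis.
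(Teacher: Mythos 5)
The paper contains no proof of Lemma \ref{FT} at all: it is imported verbatim from Bytsko--Teschner \cite[(3.31), (3.32)]{BT}, so there is nothing internal to compare your argument against, and it must be judged on its own terms (in the literature such formulas are established either by residue summation --- closing the contour onto the poles at $t=-i(nb+mb\inv)$ and resumming the resulting pair of $q$-exponential series --- or by a uniqueness argument of exactly the kind you propose). Most of your pieces are sound: the substitution $u=\frac{\log x}{2\pi i b}$ is correct; the strip of holomorphy $\mathrm{Re}\,u<\frac{Q}{2}$ follows as you say, since the modulus of the integrand behaves like $e^{\pi t(Q-2\mathrm{Re}\,u)}$ as $t\to-\infty$; the target difference equation $F(u)=\bigl(1-e^{2\pi i b^{\pm1}(Q/2+u)}\bigr)F(u+b^{\pm1})$ is indeed the one satisfied by $H(u)=\overline{\zeta_b}/G_b(\frac{Q}{2}+u)$; and your reduction of the $g_b^*$ formula via the reflection property, $G_b(Q+it)\,G_b(-it)=e^{-\pi Qt-\pi i t^2}$, is correct (equivalently one can conjugate the first formula using $\overline{G_b(x)}=1/G_b(Q-\bar{x})$).

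There is, however, one step that fails as written: you derive the difference equation by shifting the contour \emph{down}, $t\mapsto t-ib\inv$. All poles of $1/G_b(Q+it)$ lie on the non-positive imaginary axis, at $t=-i(nb+mb\inv)$ with $n,m\in\Z_{\geq0}$; since the contour $\R+i0$ passes just above $t=0$, a downward shift by $b\inv$ sweeps across the pole at $t=0$ and across $t=-inb$ for every $n$ with $nb<b\inv$ (there is at least one such $n$, since $b<1$), so the naive shift produces unaccounted residue terms and the claimed equation does not come out. The repair is to shift \emph{upward}, where $1/G_b(Q+it)$ is analytic: setting $t=s+ib$ and using $G_b(Q+is)=(1-e^{2\pi ib(Q+is-b)})G_b(Q+is-b)$ together with $e^{2\pi i bQ}=e^{2\pi ib^2}$ yields $F(u)=\bigl(1+e^{2\pi i b(u+b/2)}\bigr)F(u+b)$, which is exactly your relation since $1+e^{2\pi ib(u+b/2)}=1-e^{2\pi ib(Q/2+u)}$; even this shift needs the end-of-contour tilting you flag, because $|e^{-\pi i t^2}|=e^{2\pi\sigma\tau}$ grows as $\sigma\to+\infty$ on the raised contour and the integral is only conditionally (Fresnel) convergent. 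Two further imprecisions: (i) the assertion that ``the two difference equations force $F$ and $H$ to share the same zeros and poles'' is not by itself true --- what you need is that $\Psi=F/H$ is analytic on a strip of width exceeding $\max(b,b\inv)$, e.g.\ $|\mathrm{Re}\,u|<\frac{Q}{2}$ where $G_b(\frac{Q}{2}+u)$ is analytic and nonvanishing, after which the two periodicities extend $\Psi$ to an entire function; (ii) since both periods $b$ and $b\inv$ are \emph{real}, the elliptic-style ``bounded on a period domain'' Liouville argument does not apply literally --- either use density of $\Z b+\Z b\inv$ in $\R$ (an entire function with two incommensurable real periods is automatically constant, no boundedness needed), or prove boundedness of $\Psi$ on a single vertical strip of width $b$, in which case one period already suffices. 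With these repairs, and your normalization via the residue at $t=0$ (equivalently the limit $x\to0^+$, where $g_b\to1$), the proof closes.
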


These functions satisfy the following important properties that will be needed in our analysis.

\begin{Prop} We have the following properties for $G_b(x)$.

Self-duality:
\Eq{G_b(x)=G_{b\inv}(x);\label{selfdual}}

Functional equations: \Eq{\label{funceq}G_b(x+b^{\pm 1})=(1-e^{2\pi i b^{\pm 1}x})G_b(x);}

Reflection property:
\Eq{\label{reflection}G_b(x)G_b(Q-x)=e^{\pi i x(x-Q)};}

Complex conjugation: \Eq{\overline{G_b(x)}=\frac{1}{G_b(Q-\bar{x})},\label{Gbcomplex}}
in particular \Eq{\label{gb1}\left|G_b(\frac{Q}{2}+i x)\right|=1 \mbox{ for $x\in\R$};}

\label{asymp} Asymptotic properties:
\Eq{G_b(x)\sim\left\{\begin{array}{cc}\bar{\ze_b}&Im(x)\to+\oo\\\ze_b
e^{\pi i x(x-Q)}&Im(x)\to-\oo\end{array}.\right.}
\end{Prop}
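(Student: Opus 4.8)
The plan is to read every property off the integral representation \rf{intform}, working throughout with the logarithm
\[
\log G_b(x)=\log\overline{\ze_b}-\int_\W\frac{e^{\pi t x}}{(e^{\pi b t}-1)(e^{\pi b\inv t}-1)}\frac{\d t}{t}
\]
and using the shift relations to carry each identity to the full meromorphic continuation. Self-duality \rf{selfdual} is immediate: the kernel $1/\big[(e^{\pi b t}-1)(e^{\pi b\inv t}-1)\big]$ and the constant $\ze_b$, which depends on $b$ only through the symmetric combination $b^2+b^{-2}$, are both invariant under $b\leftrightarrow b\inv$, so the whole integrand is unchanged. For the functional equation \rf{funceq} I would form the difference $\log G_b(x+b)-\log G_b(x)$; the shifted exponential contributes a factor $e^{\pi b t}-1$ that cancels one factor of the denominator, leaving $-\int_\W\frac{e^{\pi t x}}{(e^{\pi b\inv t}-1)\,t}\,\d t$. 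Closing this contour in the upper half-plane and summing the residues at the simple poles $t=2\i b n$, $n\geq 1$ (the semicircle at the origin excludes $t=0$), produces the geometric series $-\sum_{n\geq1}e^{2\pi\i b n x}/n=\log(1-e^{2\pi\i b x})$; the $b\inv$ case is identical after $b\leftrightarrow b\inv$. These shift relations in turn locate the poles at $x=-nb-mb\inv$ and the zeros at $Q+nb+mb\inv$.

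For the reflection property \rf{reflection} I would add $\log G_b(x)+\log G_b(Q-x)$ and apply the substitution $t\mapsto -t$, which --- using $(e^{-\pi b t}-1)(e^{-\pi b\inv t}-1)=e^{-\pi Q t}(e^{\pi b t}-1)(e^{\pi b\inv t}-1)$ --- identifies the two integrals up to orientation and collapses their sum into $-2\pi\i$ times the residue of the single integrand at $t=0$. This is a triple pole; expanding $e^{\pi t x}$ and the denominator to second order shows that its residue furnishes simultaneously the quadratic exponent $\pi\i x(x-Q)$ and a constant that is precisely cancelled by $2\log\overline{\ze_b}$ --- this cancellation being exactly what fixes the normalizing phase $\ze_b$. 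The conjugation identity \rf{Gbcomplex} rests on the same substitution: for real $b$, complex conjugation sends the integrand to its value at $\bar x$ (every $t$-dependent factor being real on the axis) and reflects the semicircle to below the origin, whereupon $t\mapsto -t$ maps this reflected contour back onto $\W$ with reversed orientation while replacing $e^{\pi t x}$ by $e^{\pi t(Q-\bar x)}$; the resulting sign change of the integral, together with $\overline{\ze_b}=\ze_b\inv$, turns $G_b$ into its reciprocal, giving $\overline{G_b(x)}=1/G_b(Q-\bar x)$. Specializing to $x=\tfrac{Q}{2}+\i y$ with $y\in\R$ gives $Q-\bar x=x$, so $\overline{G_b(x)}=1/G_b(x)$ and hence $|G_b(\tfrac{Q}{2}+\i y)|=1$, which is \rf{gb1}.

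Finally, for the asymptotics I would note that on the strip $0<Re(x)<Q$ the integrand is $L^1$ on $\W$, so as $Im(x)\to+\oo$ the rapidly oscillating factor $e^{\pi t x}$ forces the integral to vanish by a Riemann--Lebesgue estimate, leaving $G_b(x)\sim\overline{\ze_b}$; the behaviour as $Im(x)\to-\oo$ is then read off from \rf{reflection}, since $Im(Q-x)\to+\oo$ gives $G_b(Q-x)\to\overline{\ze_b}$ and hence $G_b(x)\sim\ze_b\,e^{\pi\i x(x-Q)}$, using $\overline{\ze_b}=\ze_b\inv$. I expect the main obstacle to be the residue bookkeeping in the functional-equation and reflection integrals: justifying the contour closure by controlling the growth of the integrand in the relevant half-plane, and keeping the $t=0$ prescription consistent throughout --- the same semicircle convention that produces the triple-pole residue in \rf{reflection} and the orientation reversal in \rf{Gbcomplex}.
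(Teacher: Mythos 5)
The paper does not actually prove this Proposition: it is stated as a list of known properties of the quantum dilogarithm, with the proofs delegated to the cited literature \cite{BT, Ip1, PT2}. So your derivation from the integral representation \eqref{intform} is not competing with an internal argument; it is, in substance, the standard derivation found in those references, and it is correct. I checked the delicate points. For \eqref{funceq}, the residue of $e^{\pi tx}/\bigl[(e^{\pi b\inv t}-1)t\bigr]$ at $t=2ibn$ is $e^{2\pi i bnx}/(2\pi i n)$, so closing upward (with $t=0$ excluded, as you note, by the semicircle convention) gives $-\sum_{n\geq 1}e^{2\pi i bnx}/n=\log(1-e^{2\pi i bx})$; be aware that both the series and the closure require $\mathrm{Im}(x)>0$ at first, the strip being recovered by analytic continuation, and that the closure should be carried out along rectangles with horizontal edges threaded between the poles $t=2ibn$, since the decay of the integrand is not uniform near the imaginary axis --- this is exactly the obstacle you flag. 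For \eqref{reflection}, the pole at $t=0$ is indeed triple; expanding $1/(e^{z}-1)=z\inv-\frac12+\frac{z}{12}+\cdots$ in each factor gives the residue $\frac{x(x-Q)}{2}+\frac{b^2+b^{-2}}{12}+\frac14$, and since $2\log\overline{\ze_b}=-\pi i\bigl(\frac{b^2+b^{-2}}{6}+\frac12\bigr)$ the constant cancels exactly as you predict --- this cancellation is indeed what the normalization $\ze_b$ in \eqref{intform} is designed for. Your conjugation argument tacitly uses that $b$ is real (true here, $0<b<1$), which is what makes the kernel real on the axis and lets the identity $(e^{-\pi bt}-1)(e^{-\pi b\inv t}-1)=e^{-\pi Qt}(e^{\pi bt}-1)(e^{\pi b\inv t}-1)$ do the rest; it would be worth stating this explicitly, since \eqref{Gbcomplex} fails in this form for complex $b$. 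The asymptotics --- Riemann--Lebesgue on the real-axis part (uniform for $\mathrm{Re}(x)$ in compacts of $(0,Q)$), dominated convergence on the semicircle, then \eqref{reflection} together with $1/\overline{\ze_b}=\ze_b$ for $\mathrm{Im}(x)\to-\oo$ --- is likewise the standard route. In short: your proof is correct and complete modulo the contour estimates you yourself identify, and it supplies exactly the argument the paper outsources to its references.
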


In particular, using Definition \ref{Defgb}, these properties induce the following properties on $g_b(x)$:
\begin{Cor}\label{Propgb}We have the following properties for $g_b(x)$ for $x>0$:

Self duality:
\Eq{\label{selfdualg}g_b(x)=g_{b\inv}(x^{\frac{1}{b^2}});}

Unitarity:
\Eq{|g_b(x)|=1,\label{gbunitary}}
in particular, $g_b(X)$ is a unitary operator when $X$ is positive;

Asymptotic properties:
$g_b(z)$ is bounded for $\arg z>0$. More precisely, for $x>0$,
\Eq{|g_b(e^{\pi it}x)|\sim \left\{\begin{array}{cc}x^{-\frac{t}{2b^2}}&x\to+\oo\\
1&x\to0\end{array},\right. \forall t\in\R.}
\end{Cor}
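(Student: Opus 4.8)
The plan is to reduce all three assertions to Definition \ref{Defgb}, which writes $g_b(x)=\overline{\ze_b}/G_b(w)$ with $w=\frac{Q}{2}+\frac{\log x}{2\pi i b}$, and then to feed in the matching property of $G_b$ from the preceding Proposition. Two elementary observations will be used repeatedly: that $Q=b+b\inv$ and $\ze_b$ are invariant under $b\mapsto b\inv$ (the exponent defining $\ze_b$ depends only on $b^2+b^{-2}$), and that $|\ze_b|=1$ because that exponent is purely imaginary for real $b$.

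For \emph{self-duality}, first I would compute the argument of $G_{b\inv}$ occurring in $g_{b\inv}(x^{1/b^2})$. Since $\log(x^{1/b^2})=b^{-2}\log x$, one finds $\frac{\log(x^{1/b^2})}{2\pi i b\inv}=\frac{\log x}{2\pi i b}$, so this argument is exactly the same $w$ as for $g_b(x)$. Then \eqref{selfdual} gives $G_{b\inv}(w)=G_b(w)$ and the invariance of $\ze_b$ gives $\overline{\ze_{b\inv}}=\overline{\ze_b}$, which together yield \eqref{selfdualg}. For \emph{unitarity}, note that for $x>0$ the quantity $\log x$ is real and $\frac{1}{2\pi i b}=-\frac{i}{2\pi b}$ is purely imaginary, so $w$ lies on the vertical line $\frac{Q}{2}+i\R$; then \eqref{gb1} gives $|G_b(w)|=1$, and combined with $|\ze_b|=1$ this is \eqref{gbunitary}, hence unitarity of $g_b(X)$ for positive $X$.

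For the \emph{asymptotics}, writing $z=e^{\pi i t}x$ with $x>0$ I would take $\log z=\log x+\pi i t$, so that $w=\frac{Q}{2}+\frac{t}{2b}-i\frac{\log x}{2\pi b}$; thus $\mathrm{Im}(w)=-\frac{\log x}{2\pi b}$ tends to $+\infty$ as $x\to0^+$ and to $-\infty$ as $x\to+\infty$. Feeding these into the two-sided asymptotics of $G_b$ from the preceding Proposition: as $x\to0^+$ one has $G_b(w)\to\overline{\ze_b}$, so $g_b(z)\to1$; as $x\to+\infty$ one has $G_b(w)\sim\ze_b\,e^{\pi i w(w-Q)}$, so $|g_b(z)|\sim|e^{\pi i w(w-Q)}|^{-1}$. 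Setting $w=a+ic$ with $a=\frac{Q}{2}+\frac{t}{2b}$ and $c=-\frac{\log x}{2\pi b}$, a direct expansion gives $\mathrm{Re}\big(\pi i w(w-Q)\big)=-\pi c\,(2a-Q)$, and the key simplification $2a-Q=t/b$ collapses this to $\frac{t\log x}{2b^2}$. Hence $|e^{\pi i w(w-Q)}|=x^{t/(2b^2)}$ and $|g_b(e^{\pi i t}x)|\sim x^{-t/(2b^2)}$, as claimed.

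The hard part will be the $x\to+\infty$ regime, where one must correctly isolate the real part of the quadratic exponent $\pi i w(w-Q)$ and exploit the cancellation $2\mathrm{Re}(w)-Q=t/b$ that makes the Gaussian modulus come out as a clean power of $x$. The remaining point is the global boundedness for $\arg z>0$, which does not follow from the boundary asymptotics alone. Here I would observe that for $\arg z\in(0,\pi)$ the real part of $w$ stays in the strip $(\frac{Q}{2},\,\frac{b}{2}+\frac1b)$, and since $\frac{b}{2}+\frac1b<b+\frac1b=Q$, this lies strictly to the left of the first zero of $G_b$ at $\mathrm{Re}=Q$; consequently $g_b$ has no poles in this sector and is holomorphic there. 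Boundedness then follows by combining holomorphy with the two boundary estimates (decay like $x^{-t/(2b^2)}$ as $x\to+\infty$ for $t>0$ and the limit $1$ as $x\to0^+$) via a Phragm\'en--Lindel\"of argument.
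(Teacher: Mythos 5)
Your proposal is correct and follows exactly the route the paper intends: the paper states this Corollary without a separate proof, remarking only that the properties of $G_b$ from the preceding Proposition ``induce'' those of $g_b$ via Definition \ref{Defgb}, and your substitution $w=\frac{Q}{2}+\frac{\log x}{2\pi i b}$ together with self-duality \eqref{selfdual}, unimodularity \eqref{gb1}, and the two-sided asymptotics of $G_b$ (with the computation $\mathrm{Re}\,\pi i w(w-Q)=-\pi c(2a-Q)$ and $2a-Q=t/b$) is precisely that argument carried out in detail. Your additional care about boundedness in the sector (noting $\mathrm{Re}(w)<Q$ keeps $w$ away from the zeros of $G_b$, then concluding via uniform asymptotics or Phragm\'en--Lindel\"of) fills in a point the paper leaves implicit, and is sound.
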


Finally, we have the quantum exponential relations as well as the pentagon relation:
\begin{Lem}\label{qsum}If $UV=q^2VU$ where $U,V$ are positive self adjoint operators, then
\begin{eqnarray}
\label{qsum0}g_b(U)g_b(V)&=&g_b(U+V),\\
\label{qsum1}g_b(U)^*Vg_b(U) &=& q\inv UV+V,\\
\label{qsum2}g_b(V)Ug_b(V)^*&=&U+q\inv UV.
\end{eqnarray}
Note that \eqref{qsum0} and \eqref{qsum1} together imply the pentagon relation
\Eq{\label{qpenta}g_b(V)g_b(U)=g_b(U)g_b(q\inv UV)g_b(V).}\\
\end{Lem}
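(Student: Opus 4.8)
The plan is to reduce the whole lemma to a single first-order difference equation for $g_b$ together with the Weyl relation $UV=q^2VU$: the conjugation identities \eqref{qsum1}, \eqref{qsum2} and the pentagon \eqref{qpenta} will then follow essentially formally, and the genuinely analytic statement is the additive identity \eqref{qsum0}. First I would translate the functional equation \eqref{funceq} of $G_b$ into a shift relation for $g_b$. By Definition~\ref{Defgb}, $g_b(x)=\overline{\zeta_b}/G_b(y)$ with $y=\frac{Q}{2}+\frac{\log x}{2\pi i b}$; sending $x\mapsto q^2x$ shifts $y\mapsto y+b$, and since $e^{2\pi i by}=e^{\pi i bQ}x=-qx$ (using $e^{\pi i bQ}=e^{\pi i(b^2+1)}=-q$), equation \eqref{funceq} gives $G_b(y+b)=(1+qx)G_b(y)$. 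Hence
\Eqn{ g_b(q^2x)=\frac{g_b(x)}{1+qx},\tab\text{equivalently}\tab g_b(x)=(1+qx)\,g_b(q^2x). }

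For \eqref{qsum1} and \eqref{qsum2} I would combine this with the two elementary consequences of the Weyl relation, $Vf(U)V\inv=f(q^{-2}U)$ and $Uf(V)U\inv=f(q^2V)$ (valid on a suitable dense domain for Borel $f$), and with unitarity \eqref{gbunitary}, which gives $g_b(U)^*=g_b(U)\inv$. Thus $g_b(U)V=Vg_b(q^2U)=Vg_b(U)(1+qU)\inv$, which rearranges to $g_b(U)^*Vg_b(U)=V(1+qU)=V+q^{-1}UV$, proving \eqref{qsum1}; the parallel computation $g_b(V)Ug_b(V)^*=(1+qV)U=U+q^{-1}UV$ proves \eqref{qsum2}. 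The only subtlety is the domain on which these operator equalities hold; I would first verify them on the dense core $\cW$ and on the joint analytic vectors of $U,V$, then extend.

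The substantive step is \eqref{qsum0}. Here I would use the Fourier representation of Lemma~\ref{FT} to write $g_b(U)=\int_{\R+i0}\frac{e^{-\pi i t^2}}{G_b(Q+it)}U^{ib\inv t}\,dt$, and similarly for $V$, so that $g_b(U)g_b(V)$ becomes a double integral over $U^{ib\inv t}V^{ib\inv s}$. Passing to the standard realization $U=e^{2\pi bx}$, $V=e^{2\pi bp}$ and using the Weyl relation to normally order, the identity $g_b(U)g_b(V)=g_b(U+V)$ reduces to a scalar integral identity for $G_b$ of beta-integral (Ramanujan) type, itself a consequence of the functional equation \eqref{funceq}, the reflection property \eqref{reflection}, and the asymptotics of $G_b$. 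I expect \emph{this} to be the main obstacle: since all the operators are unbounded one must show that the formal Fourier manipulation is a genuine operator identity on a common dense domain, keep the contour above the pole at $t=0$, and control convergence via the asymptotics of $g_b$ in Corollary~\ref{Propgb}. As a fallback I would instead argue by uniqueness, showing that $g_b(U)g_b(V)$ and $g_b(U+V)$ are unitaries implementing the same transformation and agreeing on $\cW$.

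Finally, \eqref{qpenta} follows formally. Since $q^{-1}UV$ and $V$ again satisfy $(q^{-1}UV)V=q^2V(q^{-1}UV)$, the already-proved \eqref{qsum0} gives $g_b(q^{-1}UV)g_b(V)=g_b(q^{-1}UV+V)$, so $g_b(U)g_b(q^{-1}UV)g_b(V)=g_b(U)\,g_b(q^{-1}UV+V)$. By \eqref{qsum1} and \eqref{gbunitary}, $q^{-1}UV+V=g_b(U)\inv Vg_b(U)$, whence the functional calculus gives $g_b(q^{-1}UV+V)=g_b(U)\inv g_b(V)g_b(U)$. Substituting collapses the right-hand side to $g_b(V)g_b(U)$, which is exactly \eqref{qpenta}.
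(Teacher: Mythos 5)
The paper itself does not prove Lemma \ref{qsum}: it is recalled verbatim from the quantum dilogarithm literature (the references \cite{BT}, \cite{Ip1}, \cite{PT2} cited at the head of Section 2.2, with the analytic core going back to Faddeev--Kashaev and to Volkov \cite{Vo}). Your reconstruction follows exactly the route of those references, and its formal half checks out, constants included: the shift relation $g_b(q^2x)=g_b(x)/(1+qx)$ does follow from \eqref{funceq} via $e^{\pi i bQ}=-q$; combined with $V\inv UV=q^2U$ it gives $g_b(U)^*Vg_b(U)=V(1+qU)=V+q\inv UV$ (using $qVU=q\inv UV$), and symmetrically \eqref{qsum2}; and your pentagon derivation is exactly the argument behind the paper's closing remark that \eqref{qsum0} and \eqref{qsum1} imply \eqref{qpenta} --- the step $g_b(q\inv UV+V)=g_b(U)\inv g_b(V)g_b(U)$ is legitimate because $g_b(U)$ is unitary by \eqref{gbunitary} and $q\inv UV+V$ is by \eqref{qsum1} the unitary conjugate of the self-adjoint $V$, so functional calculus commutes with the conjugation.

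Two caveats. Minor: the statement ``$Vf(U)V\inv=f(q^{-2}U)$ for Borel $f$'' is an overstatement --- conjugation by a unitary preserves self-adjointness, while $q^{-2}U$ is not self-adjoint, so the identity only makes sense for $f$ with the appropriate analytic continuation into the sector, which $g_b$ has (note $g_b(q^2x)=g_b(x)(1+qx)\inv$ stays bounded on $\R_{>0}$ since $|1+qx|\geq\sin\pi b^2$); your plan to verify on the core $\cW$ in the Schr\"odinger realization is the right fix, but the hypothesis should be analyticity in a strip, not Borel measurability. More substantial: the one genuinely analytic assertion, \eqref{qsum0}, remains in your write-up a plan rather than a proof. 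You correctly identify the reduction via Lemma \ref{FT} to a Ramanujan/beta-type integral identity for $G_b$, and correctly flag the operator-theoretic issues in the Fourier manipulation, but you execute neither this nor the uniqueness fallback; and since your pentagon argument consumes \eqref{qsum0}, what you have actually established is \eqref{qsum1}, \eqref{qsum2}, and the implication that \eqref{qsum0} together with \eqref{qsum1} yields \eqref{qpenta} --- which matches the paper's own level of detail (it cites rather than proves the addition formula) but is not yet a self-contained proof of the lemma. The missing double-integral computation is precisely the content of \cite{BT} and \cite{Vo}, so you have rediscovered the standard proof skeleton; to close the gap you would need to carry out the normal ordering of $U^{ib\inv t}V^{ib\inv s}$ and the resulting contour integral identity.
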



\noindent{\bf 2.3. Universal $R$ operator for $\cU_{q\tilde{q}}(\sl(2,\R))$}.
In the case of $\cU_{q\tilde{q}}(\sl(2,\R))$, the universal $R$ operator is realized (see e.g. \cite{BT},  \cite{faddeev}) as a unitary operator on $P_{z_1}\ox \cP_{z_2}$ such that it gives a braiding structure of the representation of tensor product. More precisely, it satisfies the following relations:
\begin{itemize}
\item The braiding relation
\Eq{\D'(X)R:=(\s \circ \D)(X)R=R\D(X), \tab \s(x\ox y)=y\ox x\label{br};}
\item The quasi-triangularity relations
\Eq{(\D \ox id)(R)=&R_{13}R_{23}\label{qt1},\\
(id\ox\D)(R)=&R_{13}R_{12}\label{qt2}.
}\end{itemize}
Here the coproduct $\D$ acts on $R$ in a natural way on the generators, and we have also used the standard leg notation. These together imply the Yang-Baxter equation
\Eq{R_{12}R_{13}R_{23}=R_{23}R_{13}R_{12}.\label{YB}}

An explicit expression of the $R$-operator is computed in \cite{BT}. It is given formally by
\Eq{R=q^{\frac{H\ox H}{2}} g_b(e\ox f),\label{RR}}
where we recall
\Eq{e:=2\sin(\pi b^2)E,\tab f:=2\sin(\pi b^2)F, \tab K:=q^H.}

The operator $R$ acts naturally on $P_{z_1}\ox P_{z_2}$ by means of the positive representations \eqref{efk}. Note that the argument $e\ox f$ inside the quantum dilogarithm $g_b$ is positive, which makes the expression a well-defined operator. In fact it is clear that $R$ acts as a unitary operator by \eqref{gbunitary} of the properties of  $g_b(x)$. Furthermore, by the transcendental relations \eqref{powers} and self-duality \eqref{selfdualg} of $g_b$, the expression \eqref{RR} is invariant under the change of $b\corr b\inv$:
\Eq{R=\til{R}:=\til{q}^{\frac{\til{H}\ox \til{H}}{2}} g_{b\inv}(\til{e}\ox \til{f}).}
Hence in fact it simultaneously serves as the $R$-operator of the modular double $\cU_{q\til{q}}(\sl(2,\R))$.

The properties as an $R$-operator are equivalent to certain functional equations for the quantum dilogarithm $g_b$. While the quasi-triangularity relations \eqref{qt1}-\eqref{qt2} are equivalent to the quantum exponential relation \eqref{qsum0}, the braiding relation 
$$\D'(X)R=R\D(X),$$ proved in \cite{BT}, 
implies the following

\begin{Lem} \label{rank1prop}We have
\Eq{
\D'(X)R&=R\D(X)\nonumber\\
\iff\D'(e)q^{\frac{1}{2}H\ox H}g_b(e\ox f)H\ox H&=q^{\frac{1}{2}H\ox H}g_b(e\ox f)\D(e)\nonumber\\
\iff(e\ox K\inv +1\ox e)g_b(e\ox f)&=g_b(e\ox f)(e\ox K+1\ox e),
}
and similarly
\Eq{
(f\ox 1 +K\ox f)g_b(e\ox f)&=g_b(e\ox f)(f\ox 1+K\inv\ox f).
}
\end{Lem}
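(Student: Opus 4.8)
The plan is to reduce the braiding relation $\D'(X)R=R\D(X)$ to its content on the algebra generators $X\in\{e,f,K\}$, and then to convert each of these into a commutation relation purely for $g_b(e\ox f)$ by conjugating away the Cartan prefactor. Write $R=PG$ with $P:=q^{\frac{1}{2}H\ox H}$ and $G:=g_b(e\ox f)$. Since $\D$ and $\D'$ are homomorphisms while $R$ is fixed, it suffices to verify the relation on $e,f,K$. The case $X=K$ is immediate: because $\D(K)=\D'(K)=K\ox K$, and because $K\ox K$ commutes both with $P$ (a function of the commuting self-adjoint Cartan operators $H\ox 1$ and $1\ox H$) and with $G$ (from $KeK\inv=q^2e$ and $KfK\inv=q^{-2}f$ one gets $(K\ox K)(e\ox f)(K\ox K)\inv=e\ox f$, so $K\ox K$ commutes with every function of $e\ox f$), the $K$-relation holds automatically. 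This is exactly what legitimizes replacing $R$ by $PG$ and passing to the $e$- and $f$-relations.

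For $X=e$ I would start from $\D'(e)=K\ox e+e\ox 1$ and $\D(e)=e\ox K+1\ox e$, so that $\D'(e)R=R\D(e)$ reads $(K\ox e+e\ox 1)PG=PG(e\ox K+1\ox e)$. Multiplying on the left by $P\inv$ reduces everything to the conjugation of $\D'(e)$ by $P$. The key step uses the infinitesimal relation $[H,e]=2e$ (equivalent to $q^{H}eq^{-H}=q^2e$): from $[H\ox H,\,e\ox 1]=2\,e\ox H$ and $[H\ox H,\,1\ox e]=2\,H\ox e$ one sums the adjoint series to obtain $P\inv(e\ox 1)P=e\ox K\inv$ and $P\inv(1\ox e)P=K\inv\ox e$, whence $P\inv(K\ox e)P=(K\ox 1)(K\inv\ox e)=1\ox e$. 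Therefore $P\inv\D'(e)P=e\ox K\inv+1\ox e$, and the braiding relation becomes exactly $(e\ox K\inv+1\ox e)g_b(e\ox f)=g_b(e\ox f)(e\ox K+1\ox e)$, as claimed; the displayed middle line is just this same identity written before the prefactor $P$ has been moved through.

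The case $X=f$ is handled in the same way, now with $[H,f]=-2f$. From $\D'(f)=1\ox f+f\ox K\inv$ and $\D(f)=f\ox 1+K\inv\ox f$, the analogous conjugations $P\inv(1\ox f)P=K\ox f$ and $P\inv(f\ox K\inv)P=f\ox 1$ convert $\D'(f)R=R\D(f)$ into $(f\ox 1+K\ox f)g_b(e\ox f)=g_b(e\ox f)(f\ox 1+K\inv\ox f)$. I expect the genuinely delicate point to be not the algebra but the analysis: $e$ and $f$ are unbounded essentially self-adjoint operators, so the formal conjugations and the act of multiplying through by $P\inv$ must be justified on a common dense core (for instance on $\cW\ox\cW$), whereas $G=g_b(e\ox f)$ is a bounded unitary since $e\ox f$ is positive and $P$ is unitary since $|q|=1$. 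Once these manipulations are controlled on such a core the chain of equivalences follows, the actual validity of the resulting commutation relations being the braiding identity already established in \cite{BT}.
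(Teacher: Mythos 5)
Your proposal is correct and follows essentially the same route as the paper: the paper also reduces the braiding relation to the generators and conjugates the Cartan prefactor $q^{\frac{H\ox H}{2}}$ through the coproduct via exactly the relations $(K\ox E)q^{\frac{H\ox H}{2}}=q^{\frac{H\ox H}{2}}(1\ox E)$ and $(E\ox 1)q^{\frac{H\ox H}{2}}=q^{\frac{H\ox H}{2}}(E\ox K\inv)$, which are your identities $P\inv(K\ox e)P=1\ox e$ and $P\inv(e\ox 1)P=e\ox K\inv$. Your added remarks on the trivial $X=K$ case and on justifying the unbounded-operator manipulations on the core $\cW\ox\cW$ are consistent with, if slightly more explicit than, the paper's treatment.
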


\section{Modular double for $\cU_q(\osp(1|2))$ }

\noindent{\bf 3.1. Superalgebra $\gl(1|1)$: notations.} 
Let us consider the two-dimensional Clifford algebra $Cl_2$, generated by two elements $\xi$, $\eta$, such that 
\begin{eqnarray}
\xi^2=1,\quad  \eta^2=1, \quad \eta\xi+\xi\eta=1.
\end{eqnarray}
Every element in $Cl_2$ is a linear combination of $1, \xi, \eta$ and $\xi\eta$. 
We can introduce $\Z_2$ grading on the elements, so that  $\xi$, $\eta$ are odd, transforming $Cl_2$ into superalgebra $\cU(\gl(1|1))$.
Let us remind the matrix notation for the two-dimensional irreducible representation of two-dimensional Clifford algebra (and therefore $\cU(\gl(1|1))$) in $\mathbb{C}^{1|1}$:
\Eq{\xi=\veca{0&1\\1&0},\tab \eta=\veca{0&i\\-i&0},\tab i\eta\xi=\veca{-1&0\\0&1}.}
We will often deal with tensor products of superalgebra representations, so let us also remind how the tensor product of two elements of $\cU(\gl(1|1))$ is written using $4\times 4 $ matrix in terms of superalgebraic basis,
$$\veca{a&b\\c&d}\ox \veca{w&x\\y&z}=\veca{aw&ax&bw&bx\\ay&az&by&bz\\cw&-cx&dw&-dx\\-cy&cz&-dy&dz}.$$
A very important element for us of this form is $\xi\otimes \eta$:
\Eq{\xi\ox\eta=\veca{0&0&0&i\\0&0&-i&0\\0&-i&0&0\\i&0&0&0},}
so that $(\xi\ox\eta)^2=-1$. It can be reduced to diagonal form:
\Eq{P^*(\xi\ox\eta)P=\veca{-i&0&0&0\\0&-i&0&0\\0&0&i&0\\0&0&0&i},} where
\Eq{P=\frac{1}{\sqrt{2}}\veca{-1&0&1&0\\0&1&0&-1\\0&1&0&1\\1&0&1&0},}
or in terms of odd generators,
\Eq{P=\frac{1}{2\sqrt{2}}((\xi\ox\xi-i\eta\ox \xi+1\ox i\eta\xi-i\eta\xi\ox i\eta\xi)-(1\ox \xi+\xi\ox i\eta\xi+\i\eta\ox i\eta\xi+i\eta\xi\ox \xi)).}\\

\noindent {\bf 3.2. Construction of the generators.} Let $q_*=e^{\pi ib_*^2}$ where ${b_*^2=b^2+\frac{1}{2}}$, so that  $\frac{1}{2}<b_*^2<1$, and
\Eq{q_*^2=e^{2\pi ib_*^2}=e^{\pi i(2b^2+1)}=-q^2,\tab q_*=iq.}

Let us consider elements  $\cE,\cF,\cK$ such that
\Eq{[\cE,\cF]_+:=\cE\cF+\cF\cE=i\frac{\cK-\cK^{-1}}{q-q^{-1}},}
\Eq{\cK\cE=q^2\cE\cK, \tab \cK\cF=q^{-2}\cF\cK.}

They generate a Hopf algebra denoted as $\cU_q(\osp(1|2))$, so that the coproduct is given by:
\Eq{\D(\cE)=\cE\ox \cK+1\ox \cE, \tab \D(\cF)=\cK\inv\ox \cF+\cF\ox 1,\tab \D(\cK)=\cK\ox \cK.}

Let $E,F,K$ denote the generators of $\cU_{q_*}(\sl(2,\R))$, i.e.
\Eq{EF-FE=\frac{K-K\inv}{q_*-q_*\inv},}
\Eq{KE=q_*^2EK=-q^2EK,\tab KF=q_*^{-2}FK=-q^{-2}FK.}

Let us introduce two anti-commuting supersymmetric generators and the involution element from $\cU(\gl(1|1))\otimes \cU_{q_*}(\sl(2,\R))$, namely 
\begin{eqnarray}
\mathcal{Q}=\xi H,\quad \bar{\mathcal{Q}}=\eta H, \quad\mathcal{I}=i\eta\xi,
\end{eqnarray}
where $K=:q_*^H$. Then the elements 
\begin{eqnarray}
[\mathcal{Q}, E]=2E\xi, \quad [\bar{\mathcal{Q}},F]=-2F\eta, \quad K\mathcal{I},
\end{eqnarray}
up to proportionality coefficients are the generators of $\cU_q(\osp(1|2))$, namely the following Proposition holds.
\begin{Prop}\label{gen}
Given a representation for $\cU_{q_*}(\sl(2,\R))$, there exists a representation of $\cU_q(\osp(1|2))$ by
\Eq{\cE=\a E\xi, \quad \cF=F\eta, \quad \cK=Ki\eta\xi,}
where \Eq{\a=\frac{q_*-q_*\inv}{q-q\inv}=i\frac{q+q\inv}{q-q\inv}=\cot(\pi b^2)>0.}
\end{Prop}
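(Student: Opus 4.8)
The plan is to verify directly that the proposed operators $\cE=\alpha E\xi$, $\cF=F\eta$, $\cK=Ki\eta\xi$ satisfy the three defining relations of $\cU_q(\osp(1|2))$, exploiting the fact that the odd Clifford generators $\xi,\eta$ and the $\cU_{q_*}(\sl(2,\R))$ generators $E,F,K$ live in different tensor factors and hence commute with one another. First I would record the Clifford identities we will use: from $\xi^2=\eta^2=1$ and $\xi\eta+\eta\xi=1$ one computes $(\xi\eta)^2$, $\eta\xi\cdot\xi=\eta$, $\xi\cdot\eta\xi=\xi\eta\cdot\,\text{(stuff)}$, and in particular the action of $i\eta\xi=\cI$ by conjugation on $\xi$ and $\eta$; since $i\eta\xi$ is the diagonal parity-type element displayed in the matrix form, I expect $(i\eta\xi)\xi(i\eta\xi)\inv$ and $(i\eta\xi)\eta(i\eta\xi)\inv$ to return $\pm\xi,\pm\eta$ with the signs that produce the required $q^{\pm2}$ factors.

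The two $K$-commutation relations are the easiest and I would do them first. Using $\cK=Ki\eta\xi$ and the fact that $K$ commutes with $\xi,\eta$ while $E,F$ commute with $\xi,\eta$, the relation $\cK\cE=q^2\cE\cK$ reduces to combining the $\sl(2)$ relation $KE=q_*^2EK=-q^2EK$ (valid with the starred deformation parameter, recall $q_*^2=-q^2$ from the identity $q_*=iq$) with the Clifford relation $(i\eta\xi)\xi=-\xi(i\eta\xi)$; the two minus signs cancel to give exactly $q^2$, and symmetrically for $\cK\cF=q^{-2}\cF\cK$. This sign-tracking between the $q_*$-deformation of $\sl(2)$ and the parity operator $i\eta\xi$ is the conceptual heart of the construction and is precisely why $b_*^2=b^2+\tfrac12$ was chosen.

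The main work is the anticommutator relation $\cE\cF+\cF\cE = i\,(\cK-\cK\inv)/(q-q\inv)$. Here I would expand $\cE\cF+\cF\cE=\alpha(E\xi)(F\eta)+\alpha(F\eta)(E\xi)$. Since $\xi,\eta$ commute with $E,F$, this factors as $\alpha\,EF\,\xi\eta+\alpha\,FE\,\eta\xi$; the crucial point is that moving $\eta$ past $\xi$ is governed by $\xi\eta+\eta\xi=1$, so the odd part does \emph{not} simply factor out but instead couples the $EF$ and $FE$ terms. I would rewrite $\eta\xi=1-\xi\eta$ and regroup to isolate $\alpha(EF-FE)\xi\eta$ plus a term proportional to $\alpha FE$, then substitute the $\sl(2)$ relation $EF-FE=(K-K\inv)/(q_*-q_*\inv)$. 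The definition $\alpha=(q_*-q_*\inv)/(q-q\inv)$ is engineered so that this prefactor telescopes, leaving $(K-K\inv)/(q-q\inv)$ times the odd factor $\xi\eta$; finally I would identify $\xi\eta$ (or its combination with the leftover $FE$ term) with the parity element appearing in $i(\cK-\cK\inv)=i(Ki\eta\xi - K\inv(i\eta\xi)\inv)$, using $(i\eta\xi)\inv$ computed from the Clifford relations.

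The hard part will be precisely this last bookkeeping step: the naive expansion produces odd-grading factors ($\xi\eta$, $\eta\xi$, and the residual $FE$ coefficient) that must be reassembled into exactly the combination $i\eta\xi$ and its inverse sitting inside $\cK,\cK\inv$, and one must confirm that the $q_*\to q$ conversion together with the identity $q_*=iq$ makes the $i$ prefactor and all signs come out consistently. I expect no deep obstruction—only careful tracking of the interplay between the Clifford relation $\xi\eta+\eta\xi=1$ and the sign flip $q_*^2=-q^2$—but this is where an error would most easily creep in, so I would verify it in the explicit $4\times4$ matrix representation given above as an independent check.
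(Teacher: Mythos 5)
There is a genuine gap, and it sits exactly where you predicted the ``hard part'' would be. Your plan for the anticommutator takes the printed relation $\eta\xi+\xi\eta=1$ at face value and rewrites $\eta\xi=1-\xi\eta$, which gives
$\cE\cF+\cF\cE=\a(EF-FE)\xi\eta+\a FE$.
The leftover $\a FE$ is not bookkeeping that can be reassembled: $FE$ is an honest product of $E$ and $F$, not expressible through $K^{\pm1}$ and Clifford elements, so it can never be matched by $i(\cK-\cK\inv)/(q-q\inv)$, and the verification fails outright. The printed relation is a misprint. The matrix realization displayed in the same subsection gives $\xi\eta=\mathrm{diag}(-i,i)$ and $\eta\xi=\mathrm{diag}(i,-i)$, i.e.\ $\xi\eta+\eta\xi=0$, and your own treatment of the $\cK$-relations already tacitly assumes this anticommutation: under $\eta\xi=1-\xi\eta$ one computes $\xi\,(i\eta\xi)=i(\xi-\eta)$, which is \emph{not} $-(i\eta\xi)\,\xi=-i\eta$, so the sign flip you invoke to cancel $q_*^2=-q^2$ in $\cK\cE=q^2\cE\cK$ would also break. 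Your proposal is thus internally inconsistent — anticommutation for the $\cK$-relations, the misprinted relation for the anticommutator — and the $4\times4$ (really $2\times2$) matrix check you deferred to the end would have exposed this immediately; it should have been the first step.

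With the correct relation $\eta\xi=-\xi\eta$, everything you set up collapses to the paper's own proof: the odd factors pull out with a single sign, so $\cE\cF+\cF\cE=\a(EF-FE)\,\xi\eta=\a\,\frac{K-K\inv}{q_*-q_*\inv}\,\xi\eta$ with \emph{no} coupling between the $EF$ and $FE$ terms and no residual piece; then $K\xi\eta=i\cK$ and $K\inv\xi\eta=(K\eta\xi)\inv=i\cK\inv$, and the definition $\a=(q_*-q_*\inv)/(q-q\inv)$ telescopes the prefactor to give $i(\cK-\cK\inv)/(q-q\inv)$. Your analysis of the $\cK$-relations (the cancellation of $q_*^2=-q^2$ against $(i\eta\xi)\xi=-\xi(i\eta\xi)$, and the choice $b_*^2=b^2+\frac12$ as the source of these signs) is exactly the paper's computation and is correct as stated. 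The paper's proof additionally records the explicit free-field realization of $\cE,\cF,\cK$ through $U_\pm=U\xi$, $V_\pm=V\eta$, etc., which your outline omits, but that part is supplementary to the algebraic verification; the essential defect of your proposal is the use of the misprinted Clifford relation at the central step.
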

\begin{proof}
\Eqn{
\cK\cE&=(Ki\eta\xi) (\a E\xi)\\
&=\a KEi\eta\xi\xi\\
&=\a (-q^2EK)(-\xi i\eta\xi)\\
&=q^2(\a E\xi)(Ki\eta\xi) \\
&=q^2\cE\cK,
}
\Eqn{
\cK\cF&=(Ki\eta\xi) (F\eta)\\
&=KFi\eta\xi\eta\\
&=(-q^{-2}FK)(-\eta i\eta\xi)\\
&=q^{-2}(F\eta)(Ki\eta\xi)\\
&=q^{-2}\cF\cK,
}
\Eqn{\cE\cF+\cF\cE&=(\a E\xi)(F\eta)+(F\eta)(\a E\xi)\\
&=\a( EF-FE)\xi\eta\\
&=\a\left(\frac{K-K\inv}{q_*-q_*\inv}\right)\xi\eta\\
&=\a\left(\frac{K\xi\eta-K\inv \xi\eta}{q_*-q_*\inv}\right).
}
Note that $K\xi\eta=i\cK$ and $K\inv \xi\eta=(K\eta\xi)\inv=(-i\cK)\inv=i\cK\inv$, we have
\Eqn{\cE\cF+\cF\cE&=\a\left(\frac{i\cK-i\cK\inv}{q_*-q_*\inv}\right)\\
&=i\frac{\cK-\cK\inv}{q-q\inv}.
}

Explicitly, consider the standard representation 
\Eq{U=e^{2\pi b_* x}, \tab V=e^{2\pi b_*p},} so that \Eq{UV=-q^2VU.}

Then by \eqref{efk} we have \Eq{E=i\frac{V+ZU\inv}{q_*-q_*\inv},\tab F=i\frac{U+Z\inv V\inv}{q_*-q_*\inv},\tab K=q_*\inv UV.}

Now let \Eq{U_+=U\xi,\tab V_+=V\xi,\tab U_-=U\eta,\tab V_-=V\eta.}

Then, explicitly $\cE,\cF,\cK$ are given by:
\Eq{\cE=\a E\xi = i\frac{V_++ZU_+\inv}{q-q\inv}, \tab \cF=F\eta=\frac{U_-+Z\inv V_-\inv}{q+q\inv},\tab \cK=q\inv U_+V_-.}

\end{proof}

Therefore for a modular double representation of $\cU_{q_*, \t q_*}(\sl(2,\R))$ on $L^2(\mathbb{R})$ there exists a representation of $\cU_{q,\tau(q)}(\osp(1|2))$, the modular double 
of $\cU_{q}(\osp(1|2))$ constructed by means of generators $E, F, K$ and their double $\tilde E$, $\tilde F, \tilde K$ as in Proposition \ref{gen}. We notice a new relation 
between $q$ and its dual, in this case $\tau(q)$:
\begin{eqnarray}
q=e^{\pi i(b_*^2-\frac{1}{2})}, \quad  \tau(q):=e^{\pi i(\frac{1}{{b_*}^2}-\frac{1}{2})}.
\end{eqnarray}
It is also useful to notice the following (nonstandard) commutation relations between modular double generators:
\begin{eqnarray}
&&[\mathcal{E}, \t{\mathcal{E}}]_-=0, \quad [\mathcal{F}, \t{\mathcal{F}}]_-=0,\nonumber\\
&&[\mathcal{E}, \tilde{\mathcal{F}}]_+=0, \quad [\mathcal{F}, \tilde{\mathcal{E}}]_+=0, \nonumber\\
&&[\mathcal{E}, \tilde{\mathcal{K}}]_-=0, \quad [\mathcal{F}, \tilde{\mathcal{K}}]_-=0,\nonumber\\
&&[\mathcal{K}, \tilde{\mathcal{E}}]_-=0, \quad [\mathcal{K}, \tilde{\mathcal{F}}]_-=0, 
\end{eqnarray}
where $[\cdot, \cdot]_{\mp}$ stands for commutator and anti-commutator correspondingly. Let us denote by $P^s_z$ the representation of $\cU_{q,\tau(q)}(\osp(1|2))$ on ${L_2(\mathbb{R})\otimes \mathbb{C}^{1|1}}$. The operators $\mathcal{E}, \mathcal{F}, \mathcal{K}$ as well as their modular dual counterparts are densely defined in the resulting space, namely 
we can consider as usual the action of unbounded operators $U,V$ acting on the core subspace $\cW\subset L^2(\R)$ (cf. \eqref{WW}), so that the action of operators $\mathcal{E}, \mathcal{F}, \mathcal{K}$ and $\tilde{\mathcal{E}}$, $\tilde{\mathcal{F}}$, $\tilde{\mathcal{K}}$ is defined on the dense subspace $\cW\otimes \mathbb{C}^{1|1}$.  

In the next section we give a construction of $R$-matrix acting on the tensor product $P^s_{z_1}\otimes P^s_{z_2}$. In order to do that we will introduce the following auxiliary object. 

\begin{Def}Define the transformation $\Phi, \Phi\inv: L^2(\R)\to L^2(\R)$, which preserves $\cW$, as follows:
\Eq{\Phi:& f(x)\mapsto e^{-\frac{\pi i}{4b_*^2}}e^{-\frac{\pi x}{b_*}} f(x+\frac{i}{b_*})=\tilde{q}^{-\frac{1}{4}}\tilde{U}^{-\frac{1}{2}}\tilde{V}^{-\frac{1}{2}}f\\
\Phi\inv:& f(x)\mapsto e^{-\frac{\pi i}{4b_*^2}}e^{\frac{\pi x}{b_*}} f(x-\frac{i}{b_*})=\tilde{q}^{-\frac{1}{4}}\tilde{U}^{\frac{1}{2}}\tilde{V}^{\frac{1}{2}}f}
\end{Def}
Then the following Proposition holds, which will be crucial for the definition of $R$-matrix.

\begin{Prop}\label{phi} $\Phi$ and $\Phi\inv$ gives the standard isomorphism of $\cU_{q_*}(\sl(2,\R))$:
\Eq{E\mapsto -E, \tab F\mapsto -F, \tab K\mapsto K}
\end{Prop}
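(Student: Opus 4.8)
The plan is to reduce the statement to the conjugation action of $\Phi$ on the Weyl pair $U,V$. In the representation of $\cU_{q_*}(\sl(2,\R))$ used here, the generators are the explicit expressions in $U,V$ recorded in the proof of Proposition \ref{gen}, namely $E=i(V+ZU\inv)/(q_*-q_*\inv)$, $F=i(U+Z\inv V\inv)/(q_*-q_*\inv)$ and $K=q_*\inv UV$. Hence it is enough to establish the two sign flips
\Eqn{\Phi\, U\,\Phi\inv=-U,\tab \Phi\, V\,\Phi\inv=-V,}
after which the claim follows by substitution: $K=q_*\inv UV\mapsto q_*\inv(-U)(-V)=K$, while $E$ and $F$ each pick up an overall minus sign, so that $E\mapsto -E$ and $F\mapsto -F$; the real parameter $Z$ is untouched. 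One also checks directly from the defining relations that $E\mapsto-E,\ F\mapsto-F,\ K\mapsto K$ is indeed an algebra automorphism, i.e. the ``standard'' isomorphism.

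To prove the two sign flips I would work with the operator form $\Phi=\tilde q^{-1/4}\tilde U^{-1/2}\tilde V^{-1/2}$, where $\tilde U=U^{1/b_*^2}$ and $\tilde V=V^{1/b_*^2}$; the scalar $\tilde q^{-1/4}$ cancels under conjugation. Since $\tilde U$ and $U$ are both functions of $x$ they commute, and similarly $\tilde V$ and $V$ are functions of $p$ and commute, so the only inputs are the two cross relations. This is the one genuinely delicate point. Although the \emph{full} dual powers $\tilde U,\tilde V$ commute with $U,V$ — the familiar modular-double fact, the commutator of the exponents being an integer multiple of $2\pi i$ — the \emph{half} powers occurring in $\Phi$ do not. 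A Weyl (Baker--Campbell--Hausdorff) computation gives commutators of the relevant exponents equal to $\pm\pi i$, so that
\Eqn{U\,\tilde V^{1/2}=-\,\tilde V^{1/2}\,U,\tab V\,\tilde U^{1/2}=-\,\tilde U^{1/2}\,V,}
the sign $-1=e^{\pm\pi i}$ being exactly half of the trivial $e^{\pm 2\pi i}=1$. Conjugating $U$ by $\tilde V^{-1/2}$ produces one factor $-1$ while $\tilde U^{\pm1/2}$ leaves $U$ unchanged, and symmetrically for $V$; this yields $\Phi U\Phi\inv=-U$ and $\Phi V\Phi\inv=-V$.

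Alternatively one may verify the same two identities straight from the explicit shift-and-multiply description of $\Phi$, tracking the phase that the imaginary translation produces against the exponential weight $e^{2\pi b_* x}$; the bookkeeping is identical and again isolates the single sign $e^{\pm\pi i}$. Either way, the final substitution of the first paragraph completes the proof, and repeating the computation with $\Phi$ replaced by $\Phi\inv$ shows that $\Phi\inv$ realizes the same involution.

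The main obstacle is conceptual rather than computational: one must resist the naive expectation, suggested by the commutativity of the modular dual with the original algebra, that $\Phi$ should act trivially on $U,V$; the point is that $\Phi$ is assembled from \emph{half} powers of $\tilde U,\tilde V$, whose anticommutation with $U,V$ generates the crucial minus signs. A secondary, purely technical, point is well-definedness: $\Phi$ is an imaginary translation composed with multiplication by a real exponential, and the Gaussian decay of the elements of $\cW$ makes it immediate that $\Phi$ maps $\cW$ into itself, consistently with its definition, so that all the manipulations above are legitimate on this dense domain.
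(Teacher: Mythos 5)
Your proof is correct, and its skeleton coincides with the paper's: both reduce the claim to the two sign flips $U\mapsto -U$, $V\mapsto -V$ and then read off $K=q_*\inv UV\mapsto K$, $E\mapsto -E$, $F\mapsto -F$ from the explicit realization \eqref{efk}. The difference is only in how the sign flips are verified: the paper conjugates $U=e^{2\pi b_* x}$ and $V=e^{2\pi b_* p}$ directly through the shift-and-multiply description of $\Phi$, evaluating $\Phi\inv\circ U\circ\Phi f(x)$ pointwise and isolating the phase $e^{\mp\pi i}$, whereas you derive the same phases algebraically from the Weyl relations $U\,\t V^{1/2}=-\t V^{1/2}U$ and $V\,\t U^{1/2}=-\t U^{1/2}V$ for the half-powers in $\Phi=\t q^{-1/4}\t U^{-1/2}\t V^{-1/2}$ (the exponent commutators being $\pm\pi i$, half of the modular-double $\pm 2\pi i$). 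These are the same computation in two notations, and your BCH version has the merit of making conceptually transparent why $\Phi$, built from \emph{half}-powers of the dual pair, fails to commute with $U,V$ even though $\t U,\t V$ themselves do; it also silently resolves a discrepancy in the paper, since the operator form of $\Phi$ forces the imaginary shift to be $i/(2b_*)$, matching the paper's proof rather than the $i/b_*$ displayed in its Definition. One refinement you drop that the paper keeps: the proof there records the \emph{distinct} phases $\Phi\inv\circ E\circ\Phi=e^{\pi i}E$ versus $\Phi\inv\circ F\circ\Phi=e^{-\pi i}F$ (and their inverses for $\Phi\circ\cdot\circ\Phi\inv$), which is immaterial for the proposition as stated but is used later (Proposition \ref{defgb} and the definition \eqref{bound} of $g_{b_*}(-e\ox f)$) where the choice of branch $e^{\pi i}$ versus $e^{-\pi i}$ matters for the analytic continuation; if you intend your argument to feed into the $R$-matrix construction, you should track which of $e^{\pm\pi i}$ each conjugation produces rather than collapsing both to $-1$.
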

\begin{proof}
It suffices to show that the action of $U\mapsto -U$ and $V\mapsto -V$.

\Eqn{\Phi\inv \circ U \circ \Phi f(x)&=\Phi\inv \circ U\left( e^{-\frac{\pi x}{b_*}}f(x+\frac{i}{2b_*})\right)\\
&=\Phi\inv e^{2\pi b_*\pi x}e^{-\frac{\pi x}{b_*}}f(x+\frac{i}{2b_*})\\
&=e^{2\pi b_*\pi (x-\frac{i}{2b_*})}f(x)\\
&=e^{-\pi i}U f(x)=-Uf(x),
}
\Eqn{\Phi\inv \circ V \circ \Phi f(x)&=\Phi\inv \circ V\left( e^{-\frac{\pi x}{b_*}}f(x+\frac{i}{2b_*})\right)\\
&=\Phi\inv e^{-\frac{\pi (x-ib_*)}{b_*}}f(x-ib_*+\frac{i}{2b_*})\\
&=e^{\frac{\pi ib_*}{b_*}}f(x-ib_*)\\
&=e^{\pi i}V f(x)=-Vf(x).
}

The case for $\Phi\inv$ is similar, with $e^{\pm\pi i}$ replaced by $e^{\mp\pi i}$ instead.

In particular, taking into account the phases, we actually have
\Eq{\Phi\inv \circ E \circ \Phi = e^{\pi i}E, \tab \Phi\inv \circ F \circ \Phi =  e^{-\pi i }F,}
\Eq{\Phi \circ E \circ \Phi\inv = e^{-\pi i }E, \tab \Phi \circ F \circ \Phi\inv =  e^{\pi i }F.}
\end{proof}

\section{$R$-matrix for $\cU_{q, \tau(q)}(\osp(1|2))$}
First we note that the expression for the $R$-matrix acting in the product of finite dimensional representations was given in \cite{kr}.  Now we will define the analogue of $R$-matrix acting in the tensor product of $P_{z_1}\otimes P_{z_2}$. 

Let $e, f$ be the elements of the modular double $\cU_{q_*,\t q_*}(\sl(2,\mathbb{R})) $ as in Section 2. 
The the following simple property holds.

\begin{Prop}\label{defgb} We have
\Eq{(\Phi^{-1}\ox 1) g_{b_*}(e\ox f)(\Phi\ox 1)=(1\ox \Phi)g_{b_*}(e\ox f)(1\ox\Phi^{-1}).}
\end{Prop}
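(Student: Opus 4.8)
The plan is to reduce both sides to the \emph{same} operator by tracking how the conjugation by $\Phi$ (on either tensor factor) acts on the argument $e\ox f$ of the quantum dilogarithm. The key input is Proposition \ref{phi}, which tells us that $\Phi$ implements the sign automorphism on $\cU_{q_*}(\sl(2,\R))$. Since $e=2\sin(\pi b_*^2)E$ and $f=2\sin(\pi b_*^2)F$ differ from $E,F$ only by a real scalar, conjugation by $\Phi$ flips the sign of both $e$ and $f$. Concretely, on the left tensor leg I would use $\Phi^{-1}E\Phi=e^{\pi i}E$, giving $(\Phi^{-1}\ox 1)(e\ox f)(\Phi\ox 1)=e^{\pi i}(e\ox f)$, so that
\Eqn{
(\Phi^{-1}\ox 1)\,g_{b_*}(e\ox f)\,(\Phi\ox 1)=g_{b_*}\!\left(e^{\pi i}(e\ox f)\right).
}
On the right tensor leg I would use $\Phi F\Phi^{-1}=e^{\pi i}F$ (the second line of the phase formulas in Proposition \ref{phi}), which likewise yields $(1\ox\Phi)(e\ox f)(1\ox\Phi^{-1})=e^{\pi i}(e\ox f)$, hence
\Eqn{
(1\ox\Phi)\,g_{b_*}(e\ox f)\,(1\ox\Phi^{-1})=g_{b_*}\!\left(e^{\pi i}(e\ox f)\right).
}
Both sides are then manifestly equal, since the argument $e^{\pi i}(e\ox f)$ is identical.

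The step I expect to require the most care is the justification that conjugation passes \emph{inside} the function $g_{b_*}$, i.e. that $(\Phi^{-1}\ox 1)g_{b_*}(A)(\Phi\ox 1)=g_{b_*}((\Phi^{-1}\ox 1)A(\Phi\ox 1))$ for the positive self-adjoint operator $A=e\ox f$. This is the functional-calculus statement that a similarity transformation commutes with applying a function to an operator; it holds here because $\Phi$ is an invertible transformation preserving the core $\cW$, and one can see it directly from the Fourier-integral representation of $g_b$ in Lemma \ref{FT}: conjugating $A^{ib^{-1}t}$ by $\Phi$ produces $\big((\Phi^{-1}\ox 1)A(\Phi\ox 1)\big)^{ib^{-1}t}$ under the integral sign, and the scalar kernel is untouched. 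One must check that the domain issues are benign on $\cW\ox\cW$, where all the operators in question are densely defined.

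A subtlety worth flagging is the phase bookkeeping: in Proposition \ref{phi} the left-leg conjugation $\Phi^{-1}(\cdot)\Phi$ produces $e^{\pi i}$ while, when acting on the \emph{right} leg through the factor $f$, I need $\Phi(\cdot)\Phi^{-1}$ so that the $F$-conjugation again contributes $e^{\pi i}$ rather than $e^{-\pi i}$; matching the orientations of the two conjugations is precisely what makes the two sides coincide, and this is the structural reason the proposition holds. Since $e^{\pi i}(e\ox f)$ is still a positive operator (indeed it is negative of a positive operator, so strictly one tracks the branch of $g_{b_*}$ through Corollary \ref{Propgb}, whose analyticity for $\arg z>0$ legitimizes the argument), the quantum dilogarithm is well defined on it, and the two displayed expressions agree. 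I would close by remarking that this common value is exactly the conjugated kernel that will be used to build the $R$-matrix in the next step.
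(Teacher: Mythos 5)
Your proof is correct and follows essentially the same route as the paper's: conjugation on either tensor leg turns the argument into $e^{\pi i}(e\ox f)$ via the phase formulas of Proposition \ref{phi}, with $g_{b_*}(e^{\pi i}\,\cdot\,)$ made sense of through the Fourier-integral representation of Lemma \ref{FT}, and the identity extended from a dense domain to all of $L^2$ by the boundedness coming from Corollary \ref{Propgb}. One cosmetic slip: $e^{\pi i}(e\ox f)$ is of course \emph{not} positive (it is minus a positive operator), but your immediate appeal to the analytic continuation for $\arg z>0$ is precisely the paper's own resolution, so the argument stands.
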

\begin{proof}
By the action given in Prop \ref{phi}, 
$$(\Phi^{-1}\ox 1) g_{b_*}(e\ox f)(\Phi\ox 1)=g_{b_*}(e^{\pi i} e\ox f)=(1\ox {\Phi})g_{b_*}(e\ox f)(1\ox{\Phi}^{-1}),$$
where $g_{b_*}(e^{\pi i} e\ox f)$ is defined by
\Eq{g_{b_*}(e^{\pi i} X)=\int_{\R+i0} \frac{e^{-\pi it^2}e^{-\pi {b_*}\inv t}X^{ib_{*}\inv t}}{G_b(Q+it)}dt.}

In particular, since by Corollary \ref{Propgb}, $g_{b_*}(e^{\pi i}x)$ is a bounded function, the above relations which hold on a natural dense domain for the unbounded transformation $\Phi$ can be extended to the whole Hilbert space.
\end{proof}
\begin{Rem} Due to the above observation, we will \emph{define} 
\Eq{\label{bound}g_{b_*}(-e\ox f):=g_{b_*}(e^{\pi i}e\ox f)} as a bounded operator. This together with an additional factor can make the operator unitary, as being used for example in \cite{Wo} to deal with the quantum exponential function defined over general self-adjoint operators, but we will not need such generality in this paper.
\end{Rem}

Let us denote by 
\Eq{\hat{e}:=-i(q-q\inv)\cE,\tab \hat{f}:=(q+q\inv)\cF.}

Then we are ready to define the $R$-matrix operator for $\cU_{q, \tau(q)}(\osp(1|2))$. 

\begin{Def} We define the operator $R$ acting on $L^2(\R)\ox L^2(\R)\ox \C^{1 \vert 1}\otimes \C^{1\vert 1} $ by
\Eq{R=Qg_{b_*}(i\hat{e}\ox \hat{f}),}
where $Q$ is given by
\Eqn{Q&=\veca{-q_*^{H\ox H}&0&0&0\\0&q_*^{H\ox H}&0&0\\0&0&q_*^{H\ox H}&0\\0&0&0&q_*^{H\ox H}}\\
&=\frac{1}{2}(1\ox 1+i\eta\xi\ox 1+1\ox i\eta\xi+\eta\xi\ox \eta\xi)q_*^{H\ox H},
}
with $K=q_*^H$, and $g_{b_*}(i\hat{e}\ox \hat{f})$ means the following operator acting on $L^2(\R)\ox L^2(\R)\ox \C^{1 \vert 1}\otimes \C^{1\vert 1}$:
\Eqn{
g_{b_*}(i\hat{e}\ox \hat{f})&=g_{b_*}(ie\xi\ox f\eta)\\
&=Pg_{b_*}\veca{e\ox f&0&0&0\\0&e\ox f&0&0\\0&0&-e\ox f&0\\0&0&0&-e\ox f}P^*\\
&=P\veca{g_{b_*}(e\ox f)&0&0&0\\0&g_{b_*}(e\ox f)&0&0\\0&0&g_{b_*}(-e\ox f)&0\\0&0&0&g_{b_*}(-e\ox f)}P^*\\
&=P\Phi_1 g_{b_*}(e\ox f) \Phi_1\inv P^*,
}
where $\Phi_1=diag(1\ox 1,1\ox 1, \Phi\inv\ox 1,\Phi\inv\ox 1)$.

By Prop \ref{defgb}, it also equals to
\Eq{R=P\Phi_2' g_{b_*}(e\ox f) {\Phi_2'}\inv P^*,}
where
$\Phi_2'=diag(1\ox 1,1\ox 1, 1\ox \Phi,1\ox \Phi)$. Moreover, by \eqref{bound}, $R$ is a bounded operator.

\end{Def}

Let us prove all the necessary properties it has to satisfy.

\begin{Thm}\label{Rbraid} $R$ satisfies the braiding relation $\Delta'(\cdot){R}=R\Delta(\cdot)$.
\end{Thm}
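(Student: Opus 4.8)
The plan is to reduce the super braiding relation to the bosonic braiding relation of Lemma \ref{rank1prop}, which is already established for $\cU_{q_*}(\sl(2,\R))$, exploiting the fact that $\hat e = e\xi$ and $\hat f = f\eta$ (using $-i(q-q\inv)\a = -i(q_*-q_*\inv)$ and $q+q\inv = 2\sin(\pi b_*^2)$, so that $\hat e,\hat f$ are built from the \emph{bosonic} $e,f$ of Section 2). I would verify $\Delta'(X)R = R\Delta(X)$ separately for the three generators $X = \cK, \cE, \cF$.

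For $X=\cK$ the relation is a pure commutation statement: since $\Delta(\cK) = \Delta'(\cK) = \cK\ox\cK$, it suffices to show $[\Delta(\cK),R] = 0$. Because $\cK\cE = q^2\cE\cK$ and $\cK\cF = q\inv{}^2\cF\cK$, conjugation by $\cK\ox\cK$ scales $i\hat e\ox\hat f$ by $q^2\cdot q\inv{}^2 = 1$, so $\Delta(\cK)$ commutes with the argument $i\hat e\ox\hat f$ and hence with $g_{b_*}(i\hat e\ox\hat f)$; one then checks directly that both the Cartan factor $q_*^{H\ox H}$ and the Clifford projector in $Q$ commute with $\cK\ox\cK = (Ki\eta\xi)\ox(Ki\eta\xi)$. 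This disposes of the $\cK$ case.

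For $X=\cE$ (and symmetrically $\cF$) I would mimic the bosonic computation preceding Lemma \ref{rank1prop}. Writing $R = Q\,g_{b_*}(i\hat e\ox\hat f)$, the relation $\Delta'(\cE)R = R\Delta(\cE)$ is equivalent to $[Q\inv\Delta'(\cE)Q]\,g_{b_*}(i\hat e\ox\hat f) = g_{b_*}(i\hat e\ox\hat f)\,\Delta(\cE)$. The first step is to compute the conjugation $Q\inv\Delta'(\cE)Q$, where $\Delta'(\cE) = \cK\ox\cE + \cE\ox 1$: the Cartan factor $q_*^{H\ox H}$ plays the role analogous to $q^{H\ox H/2}$ in the bosonic case, converting the $\cK$ appearing in $\Delta'(\cE)$ into the shifted combination matching the left-hand side of Lemma \ref{rank1prop}, while the Clifford projector part of $Q$ together with the grading operator $i\eta\xi$ inside $\cK$ accounts for the $\Z_2$-grading signs. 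The second step is to diagonalize the Clifford degrees of freedom by $P$: conjugating the whole identity by $P^*$ turns $g_{b_*}(i\hat e\ox\hat f)$ into $\mathrm{diag}(g_{b_*}(e\ox f),g_{b_*}(e\ox f),g_{b_*}(-e\ox f),g_{b_*}(-e\ox f))$ and turns $Q\inv\Delta'(\cE)Q$ and $\Delta(\cE)$ into operators governed by the bosonic data $e,f,K$. In the two blocks carrying $g_{b_*}(e\ox f)$ the required identity is literally the first relation of Lemma \ref{rank1prop}; in the two blocks carrying $g_{b_*}(-e\ox f)$ I would use $g_{b_*}(-e\ox f) = (\Phi\inv\ox 1)g_{b_*}(e\ox f)(\Phi\ox 1)$ from \eqref{bound} and the Remark following Proposition \ref{defgb}, together with Proposition \ref{phi} ($E\mapsto -E,\ F\mapsto -F,\ K\mapsto K$), to absorb the sign flip into $\Phi$ and reduce again to the same bosonic relation. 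The $\cF$ case runs identically using the second relation of Lemma \ref{rank1prop}.

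The main obstacle will be the bookkeeping in the first two steps: correctly propagating the graded tensor-product signs (fixed by the $4\times 4$ convention of Section 3.1) through the conjugation by $Q$, and verifying that the diagonalization by $P$ sends $\Delta(\cE)$ and $\Delta'(\cE)$ to exactly the combinations $1\ox e$ and $e\ox K^{\pm 1}$ appearing in Lemma \ref{rank1prop}, block by block. Once the Clifford and Cartan algebra is organized so that each $2\times 2$ block reproduces the bosonic intertwining relation, the analytic content is entirely supplied by Lemma \ref{rank1prop} and Proposition \ref{defgb}, and no further estimates are needed since $R$ is already a bounded operator by \eqref{bound}.
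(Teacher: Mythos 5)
Your proposal is correct and follows essentially the same route as the paper's own proof: commuting $\Delta'(\cdot)$ past $Q$ via the Clifford--Cartan commutation relations, diagonalizing $\xi\ox\eta$ by $P$ to split into blocks $g_{b_*}(\pm e\ox f)$, invoking Lemma \ref{rank1prop} in the $+$ blocks, and conjugating by $\Phi\ox 1$ (Propositions \ref{phi} and \ref{defgb}) to reduce the $-$ blocks to the same bosonic relation. The only bookkeeping detail the paper makes explicit that you leave implicit is factoring the common odd Clifford element $1\ox\xi$ out of $\Delta(\cE)$ (which commutes with $g_{b_*}(ie\xi\ox f\eta)$) so that the remaining even operator can be simultaneously block-diagonalized with $\xi\ox\eta$.
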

First recall from Lemma \ref{rank1prop} that in $\cU_q(\sl(2,\R))$, using the relations such as
\Eqn{(K\ox E)q^{\frac{H\ox H}{2}}&=q^{\frac{H\ox H}{2}}(1\ox E),\\
(E\ox 1)q^{\frac{H\ox H}{2}}&=q^{\frac{H\ox H}{2}}(E\ox K\inv),}
the braiding relation is equivalent to the following relations:
\Eqn{(1\ox E+E\ox K\inv)g_b(e\ox f)&=g_b(e\ox f)(1\ox E+E\ox K),\\
(F\ox 1 +K\ox F)g_b(e\ox f)&=g_b(e\ox f)(F\ox 1+K\inv\ox F).}
\begin{Lem}We have 
\Eqn{(\cK\ox \cE)Q=Q(1 \ox \cE),&\tab (\cE\ox 1)Q=Q(\cE\ox \cK\inv),\\
(\cF\ox \cK\inv)Q=Q(\cF \ox 1),&\tab (1\ox \cF)Q=Q(\cK\ox \cF).
}
\end{Lem}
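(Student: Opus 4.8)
The plan is to use the factorization $Q = D\,Q_0$, where
\[D=\frac{1}{2}(1\ox1 + i\eta\xi\ox 1 + 1\ox i\eta\xi + \eta\xi\ox\eta\xi)=\mathrm{diag}(-1,1,1,1)\]
is the spinor sign operator on $\C^{1|1}\ox\C^{1|1}$ and $Q_0$ is the bosonic Cartan factor, which acts only on $L^2(\R)\ox L^2(\R)$. Since $E,F,K,H$ act only on the $L^2(\R)$ factors while $\xi,\eta$ act only on the Clifford factors, every bosonic operator commutes with every spinor matrix. Hence each of the four identities splits cleanly into an $\sl(2,\R)$ part and a finite-dimensional Clifford part, which I would verify separately and then recombine.

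For the bosonic part I would invoke the conjugation relations for $Q_0$ that are the exact analogues (with $q\to q_*$) of the ones recalled just before this Lemma for $q^{\frac{H\ox H}{2}}$: from $KE=q_*^2EK$ and $KF=q_*^{-2}FK$ one obtains $(E\ox1)Q_0=Q_0(E\ox K\inv)$, $(K\ox E)Q_0=Q_0(1\ox E)$, and the two corresponding relations for $F$. Substituting $\cE=\a E\xi$, $\cF=F\eta$, $\cK=Ki\eta\xi$ into the left-hand side of each identity, I would push $Q_0$ across the bosonic generators using these relations and collect the spinor matrices on the outside.

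What remains is a short list of $4\times4$ matrix identities in the super-tensor representation, in which the Koszul signs of $\ox$ are already encoded in $\xi\ox1$, $1\ox\xi$, $i\eta\xi\ox1$, and so on. The facts that do the work are $(i\eta\xi)^2=1$, which gives $\cK\inv=K\inv i\eta\xi$, together with the rearrangements
\[D(\xi\ox1)D=(\xi\ox1)(1\ox i\eta\xi),\tab D(1\ox\xi)=(i\eta\xi\ox1)(1\ox\xi)D,\]
and their $\eta$-counterparts. These are precisely what promote the bosonic output $K\inv$ (resp.\ $K$) appearing in $\cE\ox K\inv$ (resp.\ $\cK\ox\cF$) to the Clifford-dressed $\cK\inv$ (resp.\ $\cK$), so that each of the four identities collapses to a true equation.

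The main obstacle I anticipate is bookkeeping rather than conceptual: one must track at the same time the power of $K$ generated by moving $E$ or $F$ through $Q_0$ and the sign generated by conjugating an odd generator by $D$, namely $i\eta\xi\,\xi\,i\eta\xi=-\xi$ and $i\eta\xi\,\eta\,i\eta\xi=-\eta$. This sign is the Clifford avatar of the relation $q_*^2=-q^2$ between the $\sl(2,\R)$ grading $q_*^{\pm2}$ of $E,F$ and the $\osp(1|2)$ grading $q^{\pm2}$ of $\cE,\cF$; arranging it to combine with the $K$-power so that exactly $\cK^{\pm1}$ appears — with no stray power of $K$ and no stray sign — is the delicate step, and it is what fixes the normalization of the Cartan factor $Q_0$.
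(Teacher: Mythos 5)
Your proposal is correct and takes essentially the same route as the paper: the paper likewise factors $Q$ into the spinor projector $D=\mathrm{diag}(-1,1,1,1)$ times the bosonic Cartan factor, handles the bosonic part by the $q^{\frac{H\ox H}{2}}$-conjugation relations recalled just before the Lemma (with $q\to q_*$), and verifies precisely your two Clifford rearrangements --- in the equivalent forms $(i\eta\xi\ox \xi)D=D(1\ox \xi)$ and $(\xi\ox 1)D=D(\xi\ox i\eta\xi)$ --- as explicit $4\times 4$ matrix identities, with the $\cF$ relations obtained by replacing $\xi$ with $\eta$. Your closing remark that the conjugation relations fix the normalization of the Cartan factor is also on point: the Lemma requires $Q_0=q_*^{\frac{H\ox H}{2}}$ given $K=q_*^{H}$, which is what the paper implicitly uses despite the exponent displayed in its definition of $Q$.
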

\begin{proof}For the first line, it suffices to show 
$$(i\eta\xi\ox \xi)(1\ox 1+i\eta\xi\ox 1+1\ox i\eta\xi+\eta\xi\ox \eta\xi)=(1\ox 1+i\eta\xi\ox 1+1\ox i\eta\xi+\eta\xi\ox \eta\xi)(1\ox \xi)$$
and
$$(\xi\ox 1)(1\ox 1+i\eta\xi\ox 1+1\ox i\eta\xi+\eta\xi\ox \eta\xi)=(1\ox 1+i\eta\xi\ox 1+1\ox i\eta\xi+\eta\xi\ox \eta\xi)(\xi\ox i\eta\xi).$$
In other words, 
\Eqn{\veca{0&-1&0&0\\-1&0&0&0\\0&0&0&-1\\0&0&-1&0}\veca{-1&0&0&0\\0&1&0&0\\0&0&1&0\\0&0&0&1}&=\veca{-1&0&0&0\\0&1&0&0\\0&0&1&0\\0&0&0&1}\veca{0&1&0&0\\1&0&0&0\\0&0&0&-1\\0&0&-1&0},\\
\veca{0&0&1&0\\0&0&0&1\\1&0&0&0\\0&1&0&0}\veca{-1&0&0&0\\0&1&0&0\\0&0&1&0\\0&0&0&1}&=\veca{-1&0&0&0\\0&1&0&0\\0&0&1&0\\0&0&0&1}\veca{0&0&-1&0\\0&0&0&1\\-1&0&0&0\\0&1&0&0}.
}
The second line for $\cF$ is completely analogous with $\xi$ replaced by $\eta$.

\end{proof}
\begin{proof}[Proof of Theorem \ref{Rbraid}] Using the above Lemma, we have 
\Eqn{\D'(\cE)R&=(\cK\ox \cE+\cE\ox 1)Qg_{b_*}(i\hat{e}\ox \hat{f})\\
&=Q(1\ox \cE+\cE\ox \cK\inv)g_{b_*}(i\hat{e}\ox \hat{f})\\
&=Q(1\ox E\xi+E\xi\ox K\inv i\eta\xi)g_{b_*}(ie\xi\ox f\eta)\\
&=Q(1\ox E+iE\xi\ox K\inv \eta)(1\ox \xi)g_{b_*}(ie\xi\ox f\eta)\\
&=Q(1\ox E+iE\xi\ox K\inv \eta)g_{b_*}(ie\xi\ox f\eta)(1\ox \xi)\\
(*)&=Qg_{b_*}(ie\xi\ox f\eta)(1\ox E\xi+iE\xi\ox K \eta)(1\ox \xi)\\
&=Qg_{b_*}(ie\xi\ox f\eta)(1\ox E\xi+E\xi\ox K i\eta\xi)\\
&=Qg_{b_*}(ie\xi\ox f\eta)(1\ox \cE+\cE\ox \cK)\\
&=R\D(\cE).
}
Therefore we have to prove $(*)$:
$$(1\ox E+iE\xi\ox K\inv \eta)g_{b_*}(ie\xi\ox f\eta)=g_{b_*}(ie\xi\ox f\eta)(1\ox E\xi+iE\xi\ox K \eta),$$
in other words, by diagonalizing $\xi\ox \eta$,
$$(1\ox E\pm E\ox K\inv)g_{b_*}(\pm e\ox f)=g_{b_*}(\pm e\ox f)(1\ox E\pm E\ox K).$$
The $+$ equation is the standard one by Lemma \ref{rank1prop}. For the $-$ equation, taking into account the definition of $g_{b_*}(- e\ox f)=(\Phi\ox1) (g_{b_*}(e\ox f))$, by applying $(\Phi\ox 1)$ to the equations, this is again the standard relation.

Finally the proof for $\D(\cF)$ is completely analogous.
\end{proof}

In order to prove the quasi-triangularity relations, one has to define consistently what $(\D\ox id)(R)$ and $(id\ox \D)(R)$ mean. The necessary ingredient for those is the appropriate definition of $g_{b_*}(\pm U\pm V)$, where $UV=q_*^2VU$. Consider the following relation between bounded operators
$$g_{b_*}(U+zV)=g_{b_*}(U)g_{b_*}(zV),$$
where $z\in\R_{>0}$, by the quantum exponential relation \eqref{qsum0}. By Corollary \ref{Propgb}, $g_{b_*}(zV)$ is bounded for $\arg z>0$ and the action on $f(x)$ depends analytically on z. Hence the right hand side gives the expression for the analytic continuation of the operator with respect to $z$. In particular, by analytic continuation to $z=e^{\pi i}$, we can define
\Eq{\label{gbu-v}g_{b_*}(U-V):=g_{b_*}(U)g_{b_*}(e^{\pi i}V)=g_{b_*}(U)g_{b_*}(-V).}
Similarly we define
\Eq{\label{gb-u+v}g_{b_*}(-U+V):=g_{b_*}(e^{\pi i}U)g_{b_*}(V)=g_{b_*}(-U)g_{b_*}(V),}
while $g_{b_*}(-U-V)$ is simply $g_{b_*}(e^{\pi i}(U+V))$ as before.

Using this notion we are ready to formulate the following Theorem.
\begin{Thm}\label{RqT} $R$ satisfies the quasi-triangular relations $(\D \ox id)(R)=R_{13}R_{23}$ and 
$(id\ox\D)(R)=R_{13}R_{12}$.
\end{Thm}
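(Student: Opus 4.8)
The plan is to reduce each quasi-triangularity relation, exactly as in the bosonic case of Section 2.3, to the quantum exponential relation \eqref{qsum0} for $g_{b_*}$, carrying along the Clifford factor $Q$ and absorbing all signs into the definitions \eqref{bound}, \eqref{gbu-v}, \eqref{gb-u+v}. I treat $(\D\ox id)(R)=R_{13}R_{23}$ first. Since $\hat e=e\xi$ and $\hat f=f\eta$ are scalar multiples of $\cE$ and $\cF$, the coproduct of $\cU_q(\osp(1|2))$ gives $\D(\hat e)=\hat e\ox\cK+1\ox\hat e$, so that, writing $A:=i\hat e\ox\cK\ox\hat f$ and $B:=i\,1\ox\hat e\ox\hat f$,
\Eq{(\D\ox id)(i\hat e\ox\hat f)=A+B.}
On the other side I would expand $R_{13}R_{23}=Q_{13}\,g_{b_*}(i\hat e\ox 1\ox\hat f)\,Q_{23}\,g_{b_*}(i\,1\ox\hat e\ox\hat f)$ and commute $Q_{23}$ to the left through the first dilogarithm. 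The aim is to arrive at $(\D\ox id)(Q)\,g_{b_*}(A+B)$, so that the whole statement collapses to the single identity $g_{b_*}(A+B)=g_{b_*}(A)\,g_{b_*}(B)$.

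The commutation of $Q_{23}$ past $g_{b_*}(i\hat e\ox 1\ox\hat f)$ is the super analogue of the bosonic relations used before Lemma \ref{rank1prop} (such as $(K\ox E)q^{\frac{H\ox H}{2}}=q^{\frac{H\ox H}{2}}(1\ox E)$): conjugation by the factor $q_*^{1\ox H\ox H}$ inside $Q_{23}$ inserts a $K$ in the second leg, while the Clifford part of the projector in $Q_{23}$ dresses it into the full $\cK=Ki\eta\xi$, thereby turning $g_{b_*}(i\hat e\ox1\ox\hat f)$ into $g_{b_*}(A)$. It then remains to recombine $Q_{13}Q_{23}$ into $(\D\ox id)(Q)$; this uses $\D(\cK)=\cK\ox\cK$ together with Clifford projector identities on $\C^{1|1}\ox\C^{1|1}\ox\C^{1|1}$ of the same type as the Lemma proved inside Theorem \ref{Rbraid}.

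For the resulting identity $g_{b_*}(A+B)=g_{b_*}(A)g_{b_*}(B)$ I would separate bosonic and Clifford data: $A=-(e\ox K\ox f)\,C_A$ and $B=i\,(1\ox e\ox f)\,C_B$, with $C_A=\xi\ox\eta\xi\ox\eta$ and $C_B=1\ox\xi\ox\eta$ the graded tensor products. A direct computation shows that $C_A$ and $C_B$ commute and that $C_A^2=1$, $C_B^2=-1$, so that on their joint eigensectors $A$ and $B$ become $sU$ and $tV$ with $s,t\in\{\pm1\}$, where $U=e\ox K\ox f$ and $V=1\ox e\ox f$ are positive and obey $UV=q_*^2VU$ because $Ke=q_*^2eK$. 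Sectorwise the factorization $g_{b_*}(sU+tV)=g_{b_*}(sU)g_{b_*}(tV)$ is then precisely \eqref{qsum0} when $s=t=+$, is \eqref{gbu-v} or \eqref{gb-u+v} when exactly one sign is negative, and is the analytic continuation of \eqref{qsum0} to $e^{\pi i}$ (i.e. \eqref{bound}) when both are negative. Consistently, in the full graded algebra one has $AB=q_*^2BA$, so the various sign choices assemble into a single operator identity.

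The relation $(id\ox\D)(R)=R_{13}R_{12}$ is handled identically, now using $\D(\hat f)=\cK\inv\ox\hat f+\hat f\ox 1$ to split the argument and the companion $Q$-commutation identities for $\cF$ (the second line of the Lemma inside Theorem \ref{Rbraid}). I expect the main obstacle to be neither the analytic input nor the bosonic factorization, both of which are already in place, but the sign bookkeeping: one must verify that the Koszul signs produced when odd generators are permuted in the threefold super tensor product, the discrepancy between $q^2$ and $q_*^2=-q^2$, and the branch choices built into \eqref{bound}, \eqref{gbu-v}, \eqref{gb-u+v} cancel consistently in every Clifford sector and in the assembly of $(\D\ox id)(Q)$ from $Q_{13}Q_{23}$. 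Once this accounting is pinned down, each sector reduces to a correctly-signed instance of the quantum exponential relation and the theorem follows.
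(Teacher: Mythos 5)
Your proposal is correct and takes essentially the same route as the paper: the paper likewise reduces the claim to $Q_{13}Q_{23}=\D(Q)$ (from group-likeness of $K$ and $i\eta\xi$), commutes $Q_{23}$ through the first dilogarithm to insert $\cK$ in the middle leg, and then proves $g_{b_*}(A+B)=g_{b_*}(A)g_{b_*}(B)$ sector-by-sector using exactly the branch conventions \eqref{bound}, \eqref{gbu-v}, \eqref{gb-u+v} applied to $U=e\ox K\ox f$, $V=1\ox e\ox f$ with $UV=q_*^2VU$. Your joint-eigensector decomposition of the commuting Clifford elements $C_A$, $C_B$ is precisely what the paper's explicit conjugations by $P_{13}$ and $P_{23}$ implement (yielding the four diagonal blocks $g_{\pm\pm}=g_{b_*}(\pm e\ox K\ox f)\,g_{b_*}(\pm 1\ox e\ox f)$), and the sign bookkeeping you defer is carried out there as a finite $8\times 8$ matrix computation together with the explicit check (the $g_{+,-}$ case) that the $\Phi$-conjugation definition of $g_{b_*}$ at negative arguments is compatible with the analytic-continuation definition.
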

\begin{Lem} We have 
\Eq{Q_{13}Q_{23}=\D(Q).}
\end{Lem}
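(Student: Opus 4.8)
The plan is to exploit the factorized structure of $Q$. Writing $\mathcal{I}=i\eta\xi$ and recalling $\eta\xi\ox\eta\xi=-\mathcal{I}\ox\mathcal{I}$, I would set
\Eqn{
Q&=\Pi\cdot q_*^{H\ox H},\tab \Pi:=\frac{1}{2}(1\ox 1+\mathcal{I}\ox 1+1\ox\mathcal{I}-\mathcal{I}\ox\mathcal{I}),
}
so that the bosonic Cartan factor $q_*^{H\ox H}$ and the Clifford projection $\Pi$ are both diagonal and hence commute, and likewise $Q_{13}Q_{23}=\Pi_{13}\Pi_{23}\cdot(q_*^{H\ox H})_{13}(q_*^{H\ox H})_{23}$. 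Interpreting $\D(Q)$ as $(\D\ox id)(Q)$ acting on the three slots $1,2,3$ (with slot $3$ the common, undivided factor shared by $Q_{13}$ and $Q_{23}$), the identity then reduces to two independent statements: the bosonic identity $(q_*^{H\ox H})_{13}(q_*^{H\ox H})_{23}=(\D\ox id)(q_*^{H\ox H})$ and the Clifford identity $\Pi_{13}\Pi_{23}=(\D\ox id)(\Pi)$.

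For the bosonic part I would use that $H$ descends from the $\sl(2,\R)$ Cartan with the additive coproduct $\D(H)=H\ox 1+1\ox H$ (equivalently $\D(K)=K\ox K$). Since $H\ox 1$ and $1\ox H$ commute, $q_*^{\D(H)\ox H}=q_*^{H\ox 1\ox H}q_*^{1\ox H\ox H}=(q_*^{H\ox H})_{13}(q_*^{H\ox H})_{23}$, which is exactly the desired factorization. For the Clifford part I would first record that $\mathcal{I}$ is grouplike, $\D(\mathcal{I})=\mathcal{I}\ox\mathcal{I}$: this follows because $\cK=K\mathcal{I}$ and $K=q_*^H$ are both grouplike and $K,\mathcal{I}$ decouple across tensor factors. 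Hence
\Eqn{
(\D\ox id)(\Pi)&=\frac{1}{2}(1+\mathcal{I}_1\mathcal{I}_2+\mathcal{I}_3-\mathcal{I}_1\mathcal{I}_2\mathcal{I}_3),
}
where $\mathcal{I}_j$ denotes $\mathcal{I}$ in slot $j$. On the other side, $\Pi_{13}=\frac{1}{2}(1+\mathcal{I}_1+\mathcal{I}_3-\mathcal{I}_1\mathcal{I}_3)$ and $\Pi_{23}=\frac{1}{2}(1+\mathcal{I}_2+\mathcal{I}_3-\mathcal{I}_2\mathcal{I}_3)$, and expanding the product using $\mathcal{I}_j^2=1$ together with the commutativity of the diagonal $\mathcal{I}_j$'s, all terms linear in $\mathcal{I}_1$ or $\mathcal{I}_2$ alone and all terms in $\mathcal{I}_1\mathcal{I}_3$ or $\mathcal{I}_2\mathcal{I}_3$ cancel in pairs (the cancellations being driven by the relation $\mathcal{I}_3^2=1$), leaving precisely $\frac{1}{2}(1+\mathcal{I}_3+\mathcal{I}_1\mathcal{I}_2-\mathcal{I}_1\mathcal{I}_2\mathcal{I}_3)$. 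Equivalently, one may run this entire verification as a direct $8\times 8$ matrix computation in the diagonal basis, matching the explicit form of $Q$ given after the Definition.

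The routine part is the algebra; the main obstacle is conceptual bookkeeping. One must be careful that the two legs of $Q$ carry genuinely different coproducts — the bosonic $H$ is primitive (additive), whereas the Clifford $\mathcal{I}$ is grouplike (multiplicative) — and that $Q_{13}$ and $Q_{23}$ share slot $3$, so the nontrivial content lies in how the exponents add and how the Clifford factors collapse in that common slot. Once $\D(\mathcal{I})=\mathcal{I}\ox\mathcal{I}$ is justified and the factorization is set up cleanly, both pieces follow by inspection.
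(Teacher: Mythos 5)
Your proposal is correct and follows essentially the same route as the paper: both deduce grouplikeness of $K$ and $\mathcal{I}=i\eta\xi$ from $\D(\cK)=\cK\ox\cK$, factor $Q$ into the Cartan exponential $q_*^{H\ox H}$ (whose legs combine via $H\ox 1\ox H + 1\ox H\ox H$) times the Clifford projector, and then verify the residual Clifford identity. Your algebraic expansion in the commuting involutions $\mathcal{I}_j$ is just an equivalent restatement of the paper's diagonal $8\times 8$ matrix check, which you also note.
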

\begin{proof}
Since $$\D(\cK)=\cK\ox \cK= Ki\eta\xi \ox Ki\eta\xi =(K\ox K) (i\eta\xi\ox i\eta\xi),$$
we see that both $K$ and $i\eta\xi$ is group-like. Hence
$$\D(Q)=\frac{1}{2}(1\ox1 \ox 1+i\eta\xi\ox i\eta\xi\ox 1+1\ox1\ox i\eta\xi-i\eta\xi\ox i\eta\xi\ox i\eta\xi)q_*^{H\ox 1\ox H+1\ox H\ox H}.$$
Then $\D(Q)=Q_{13}Q_{23}$ amounts to the matrix equation
$$diag(1,1,-1,1,-1,1,1,1)=diag(-1,1,-1,1,1,1,1,1)diag(-1,1,1,1,-1,1,1,1).$$
\end{proof}

\begin{proof} [Proof of Theorem \ref{RqT}] 
We have
\Eqn{&R_{13}R_{23}\\
&=Q_{13}g_{b_*}(i\hat{e}\ox1\ox \hat{f})Q_{23}g_{b_*}(1\ox i\hat{e}\ox \hat{f})\\
&=Q_{13}Q_{23}g_{b_*}(i\hat{e}\ox\cK\ox \hat{f})g_{b_*}(1\ox i\hat{e}\ox \hat{f})\\
&=\D(Q)g_{b_*}(i\hat{e}\ox\cK\ox \hat{f})g_{b_*}(1\ox i\hat{e}\ox \hat{f}).
}
Therefore we have to show that 
$$g_{b_*}(i\hat{e}\ox\cK\ox \hat{f})g_{b_*}(1\ox i\hat{e}\ox \hat{f})\\=g_{b_*}(i\D(\hat{e})\ox \hat{f}).$$
Using our transformations, we have
\Eqn{&g_{b_*}(i\hat{e}\ox\cK\ox \hat{f})\\
&=P_{13}g_{b_*}\left((ie\ox K\ox f)\cdot diag(-1,-1,-1,-1,1,1,1,1)\right) P_{13}^*
}
and
\Eqn{&g_{b_*}(1\ox i\hat{e}\ox \hat{f})\\
&=P_{23}g_{b_*}\left((i\ox e\ox f)\cdot diag(1,1,-1,-1,1,1,-1,-1)\right)P_{23}^*,
}

where
\Eqn{ \tiny{
P_{13}=\frac{1}{\sqrt{2}}\veca{
-1&0&0&0&1&0&0&0\\
0&1&0&0&0&-1&0&0\\
0&0&-1&0&0&0&1&0\\
0&0&0&1&0&0&0&-1\\
0&1&0&0&0&1&0&0\\
1&0&0&0&1&0&0&0\\
0&0&0&1&0&0&0&1\\
0&0&1&0&0&0&1&0
},
P_{23}=\frac{1}{\sqrt{2}}\veca{
-1&0&1&0&0&0&0&0\\
0&1&0&-1&0&0&0&0\\
0&1&0&1&0&0&0&0\\
1&0&1&0&0&0&0&0\\
0&0&0&0&-1&0&1&0\\
0&0&0&0&0&1&0&-1\\
0&0&0&0&0&1&0&1\\
0&0&0&0&1&0&1&0
}
}.}
Now note that $$P_{13}^*P_{23}=cP_{23}P_{13}^*,$$
where $$c=\tiny{\veca{0&0&0&0&-1&0&0&0\\0&0&0&0&0&-1&0&0\\0&0&0&0&0&0&1&0\\0&0&0&0&0&0&0&1\\
-1&0&0&0&0&0&0&0\\0&-1&0&0&0&0&0&0\\0&0&1&0&0&0&0&0\\0&0&0&1&0&0&0&0}},$$
$$diag(-1,-1,-1,-1,1,1,1,1)\cdot c=c \cdot diag(1,1,1,1,-1,-1,-1,-1),$$
$$diag(1,1,1,1,-1,-1,-1,-1)\cdot P_{23}=P_{23}\cdot diag(1,1,1,1,-1,-1,-1,-1),$$
$$P_{13} cP_{23}=P_{23}P_{13},$$
and finally
$$P_{13}^*\cdot diag(1,1,-1,-1,1,1,-1,-1)=diag(1,1,-1,-1,1,1,-1,-1)\cdot P_{13}^*.$$

Combining, we get
\Eqn{R_{13}R_{23}=&\D(Q)P_{23}P_{13} XP_{13}^*P_{23}^*,
}
where
\Eqn{X=&g_{b_*}((e\ox K\ox f)\cdot diag(1,1,1,1,-1,-1,-1,-1))\cdot\\
&g_{b_*}((1\ox e\ox f)\cdot diag(1,1,-1,-1,1,1,-1,-1))\\
&=diag(g_{++},g_{++},g_{+-},g_{+-},g_{-+},g_{-+},g_{--},g_{--})
}
and $$g_{\pm,\pm}=g_{b_*}(\pm e\ox K\ox f)g_{b_*}(\pm 1\ox e\ox f),$$ which are bounded operators.

Recall that due to conjugation by $\Phi$, the above phase is chosen to be $-1=e^{\pi i}$. Hence according to our definition using \eqref{gbu-v}-\eqref{gb-u+v}, 
$$g_{\pm,\pm}=g_{b_*}(\pm e\ox K\ox f\pm 1\ox e\ox f).$$
One can check that this is indeed compatible with our previous definition using $\Phi$. For example
\Eqn{g_{+,-}&=g_{b_*}(e\ox K\ox f)g_{b_*}(-1\ox e\ox f)\\
&:=g_{b_*}(e\ox K\ox f)(1\ox\Phi\inv\ox1)g_{b_*}(1\ox e\ox f)(1\ox\Phi\ox1)\\
&=(1\ox \Phi\inv\ox 1)g_{b_*}(e\ox K\ox f)g_{b_*}(1\ox e\ox f)(1\ox\Phi\ox1)\\
&=(1\ox \Phi\inv\ox 1)g_{b_*}(e\ox K\ox f+1\ox e\ox f)(1\ox\Phi\ox1)\\
&=g_{b_*}(e\ox K\ox f-1\ox e\ox f).
}

Finally, a simple computation shows that
\Eqn{&P_{23}P_{13} XP_{13}^*P_{23}^*\\
&=:g_{b_*}\tiny{\veca{
0&0&0&-1\ox e\ox f&0&e\ox K\ox f&0&0\\
0&0&1\ox e\ox f&0&-e\ox K\ox f&0&0&0\\
0&1\ox e\ox f&0&0&0&0&0&e\ox K\ox f\\
-1\ox e\ox f&0&0&0&0&0&-e\ox K\ox f&0\\
0&-e\ox K\ox f&0&0&0&0&0&-1\ox e\ox f\\
e\ox K\ox f&0&0&0&0&0&1\ox e\ox f&0\\
0&0&0&-e\ox K\ox f&0&1\ox e\ox f&0&0\\
0&0&e\ox K\ox f&0&-1\ox e\ox f&0&0&0\\}
}\\
&=g_{b_*}((e\ox K\ox f)(\xi\ox i\eta\xi\ox i \eta)+(1\ox e\ox f)\cdot(1\ox \xi\ox i\eta)\\
&=g_{b_*}(i\hat{e}\ox \cK\ox \hat{f}+i\ox \hat{e}\ox \hat{f})\\
&=g_{b_*}(i\D(\hat{e})\ox \hat{f})
}
as desired.

Similarly, for $(1\ox \D)(R)$, we just need to use the definition of $g_{b_*}$ with $1\ox\Phi\inv$ in place of $\Phi\ox1$.
\end{proof}

\begin{Rem} We note here that operator $R$ is bounded but not unitary.  Its inverse is given by the (unbounded) operator $\tilde{R}$ which can be constructed in a similar 
fashion from $g_{b^*}$ using $(\Phi\inv\ox1)$ instead of $(\Phi\ox1)$. A calculation shows that 
\Eq{R^*\tilde{R}=1.}
\end{Rem}

\section{Final remarks}

First we want to underline the following fact about the representations $P^s_z$ we introduce. In the standard case of finite-dimensional representations of 
$U_q(\osp(1|2))$ in the classical limit $q\to 1$ the representation space of $V^s_{\lambda}$, labeled by heights weight $\lambda\in\mathbb{Z}$ decomposes in the following 
way $V^s_{\lambda}=V^e_{\lambda}\oplus V^o_{\lambda-1}$ with respect to $\cU(\sl(2))$ subalgebra, where $V_{\lambda}$ are $\cU(\sl(2))$ heights weight modules, so that $e$ and $o$ letters mean that decomposition preserves the grading, i.e. $V^e_{\lambda}$ is spanned by even vectors and $V^o_{\lambda-1}$ by odd ones. In the case of 
$P^s_z$, it naturally decomposes in the following way: $P^s_z=P^{(e)}_z\oplus P^{(o)}_z$, where $P^{(e)}_z$,  $P^{(o)}_z$ are representations of 
$\cU_{q_*\tilde{q_*}}(\sl(2,\R))$ generated by even and odd vectors correspondingly. So $\sl(2,\mathbb{R})$ makes its appearance similar to the classical case (although it is not a subalgebra anymore). 

An important problem is the construction of the tensor category of representations $P^s_z$. 
It is known that the representations of the modular double of $\cU_q(\sl(2),\R)$ form a "continuous" tensor category. 
We hope that the same holds for the representations of $\cU_q(\osp(1|2, \mathbb{R}))$, which we introduced in this article form a similar tensor category as well as in the case of finite-dimensional representations.

After construction of this category it will be interesting to compare the corresponding $3j$ symbols to the 
coefficients in the fusion product of $\cN=1$ SUSY Liouville vertex operators, following the ideas from \cite{teschner}.

Another interesting problem is to generalize the construction of modular double and its universal $R$-matrix to higher rank simple superalgebras. In the compact case, explicit formula for the universal $R$-matrix of all simple superalgebra using the $q$-exponential function has been obtained in \cite{KT}, while construction of the modular double $\cU_{q\tilde{q}}(\g_\R)$ using positive representations was done for all simple algebras \cite{fip}, \cite{Ip2}, \cite{Ip3}, and their corresponding $R$-operator in \cite{Ip4}.

\section*{Acknowledgments}
We are grateful to Hyun-Kyu Kim and Rishi Raj for useful discussions. The first author is supported by World Premier International Research Center Initiative (WPI Initiative), MEXT, Japan.

\end{document}